\newcommand{\IA}{{\bf IA}}
\newcommand{\IS}{{\bf ISG}}
\newcommand{\BS}{{\bf BSG}}
\newcommand{\A}{\mathfrak{A}}
\newcommand{\s}{\mathfrak{S}}
\newcommand{\B}{\mathfrak{B}}
\newcommand{\M}{\mathfrak{M}}
\newcommand{\G}{\mathfrak{G}}
\title{Implication Algebras and Implication Semigroups of Binary Relations\thanks{This work was supported by the Engineering and Physical Sciences Research Council EP/S021566/1}}
\author{Andrew Lewis-Smith\inst{1}\orcidID{0000-0001-9020-4055
} and Ja{\v s} {\v S}emrl\inst{2}\orcidID{0000-0001-7440-8867}}
\institute{King's College London \email{andrew.lewis-smith@kcl.ac.uk} \url{http://eecs.qmul.ac.uk/profiles/lewis-smithandrewstephen.html}
\and UCL (University College London) \email{j.semrl@cs.ucl.ac.uk} \url{http://www0.cs.ucl.ac.uk/staff/jsemrl/}}
\begin{document}

\maketitle

\begin{abstract}
    Representable implication algebras are known to be axiomatised by a finite number of equations (making the representation and finite representation problems decidable here). We show that this also holds in the context of unary (and binary) relations and present a Stone-style representation theorem. We then show that the (finite) representation  decision problem is undecidable for implication semigroups, in stark contrast with implication algebras.
    
    \keywords{Implication Algebras \and Implication Semigroups \and Representability as Binary Relations}
\end{abstract}

\section{Introduction}
The variety of \emph{implication algebras}, so-named by Abbott \cite{10.2307/43679502} and studied by Rasiowa \cite{Rasiowa1974-RASAAA-2}, Diego \cite{diego1965algebras}, and their students, forms the algebraic semantics of the implicational fragment of classical propositional logic. These are Boolean algebras restricted to one operation ($\to$) and a constant ($\top$ or 1). 
The variety of \emph{Relation algebras}, alias residuated Boolean algebras with an additional involution operator ($x^\smile$ or `converse of $x$', with $x$ understood as a relation), forms the algebraic semantics of the calculus of relations. By a classical result of Korselt attributed in \cite{LwenheimberMI}, the variety of relation algebras corresponds to the three-variable fragment of classical first-order logic, permitting a study of mathematical logic, particularly set theory \cite{Tarski1987-TARAFO}, via a quantifier-free equational theory. Proofs in this theory consist of simple manipulations of identities, similar to proofs in abstract algebra. This situation contrasts with proofs of standard mathematical logic (or set theory) which can involve complex alternations of quantifiers. 
Meanwhile, implication algebras (having one operation, classical implication) yield an algebraic analysis of the entailment relation between propositions in classical logic. Although informed by different motivations, a certain elegance recommends the study of relation and implication algebras.
\par The present paper considers a fragment of the signature of relation algebras we call \emph{implication semigroups} based on adjoining a semigroup operation ($;$), i.e. relational composition, to the implication algebras of Abbott. When the carrier set of this structure is a set of binary relations, we obtain the fragment of relation algebras consisting of $(S, \to, ;)$, i.e. relation algebras with signature restricted to implication and composition. There are good reasons to examine this signature. For one, it has not been well-explored: practically speaking, most algebraic structures considered in algebraic logic are residuated lattices, groups, or at least monoids -- this can be noticed already in a standard definition of relation algebras, as residuated lattices \cite{Jnsson1993RelationAA} -- where the implication and monoidal operations interact via residuation. Algebras featuring implication and semigroup operations fall out of the mainstream substructural logic literature as the algebra at hand lacks the interdefinability present even in the case of a residuated monoid.

For algebraic logic (particularly relation algebras) the question of whether an algebra has a finite representation looms large. One typically asks whether a given logical system of interest is not just consistent but has finite models, i.e. models we can inspect within finite time or employing finitely many resources. The present paper demonstrates the (finite) representation problem for implication semigroups is undecidable. Our results are curious for two reasons. First, implication semigroups represent, in a sense, a limiting case of substructural logics of implication for which the question of decidability of finite representations, to our knowledge, has not been raised, and certainly not approached from the angle considered here. This suggests a track of further research in what one might call \emph{substructural relation algebras}, exploring the effects of weakening the Boolean base in relation algebra into other algebras of residuation. This is already a current area of research by Peter Jipsen and Nikolaos Galatos \cite{galatos2020weakening} \cite{galatos2020structure}, and has been broached from another angle in \cite{https://doi.org/10.48550/arxiv.2007.13079}, where the signature considered there bears two residuals and a semigroup operation and is in fact a model of the famed Lambek Calculus (thus connecting that algebra to the base system for infinitely many substructural logics). Second, our results contribute to a research programme seeking a better grasp of the consequences for relation algebras when operating in a restricted signature. We are particularly motivated to understand the effect on representability when moving to subsignatures of the standard presentation of a relation algebra \cite{hirsch2002relation}.


\section{Preliminaries}
\label{sec:prelim}

In this section we present the definitions of the algebraic structures and operators for binary relations. We begin by defining Abbott's implication algebras.

\begin{definition}
An \emph{implication algebra}\footnote{Also known as Tarski algebras.} $\A$ is a pair $(A, \to)$, with $A$ a set and $\to$ a binary operation on $A$ satisfying the following properties:

\begin{enumerate}[label=(\roman*)]
        \item $(a \to b) \to a = a$ (Contraction)
        \item $(a \to b) \to b = (b \to a) \to a$ (Quasi-commutativity)
        \item $a \to (b \to c) = b \to (a \to c)$ (Exchange)
    \end{enumerate}
\end{definition}

Trivially, because the class of implication algebras is equationally definable, it forms a \emph{variety}. We shall refer to this class as $\IA$. Abbott shows a neat property about these in \cite{10.2307/43679502}.

\begin{proposition}[Abbott]\label{prop:abbottid}
Let $\A = (A,\to)$ be an implication algebra. We can implicitly define a constant $1$ as $a \to a$ such that $b \to 1 = 1$ and $1 \to b = b$, for all $b \in A$.
\end{proposition}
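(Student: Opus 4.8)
The plan is to isolate three auxiliary identities, all direct consequences of the axioms, and then bolt them together. Throughout, write $1_a$ for the term $a \to a$; the real content of the proposition is that $1_a$ is independent of $a$, since once that is known the two stated identities are near-immediate instances of the axioms.

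The first auxiliary identity is an idempotency law, $a \to (a \to b) = a \to b$. Setting $c := a \to b$, Contraction gives $c \to a = a$; substituting this equal term for the first occurrence of $a$ in $a \to c$ yields $a \to c = (c \to a) \to c$, and a second use of Contraction (now in the shape $(c \to a) \to c = c$) collapses the right-hand side to $c$. The second identity records that $1$ is unchanged on passing from $a$ to $a \to b$, i.e. $1_{a \to b} = 1_a$: instantiating Quasi-commutativity with its two variables set to $a$ and $a \to b$ gives $\bigl(a \to (a \to b)\bigr) \to (a \to b) = \bigl((a \to b) \to a\bigr) \to a$, whose left-hand side the first identity rewrites as $(a \to b) \to (a \to b) = 1_{a \to b}$ and whose right-hand side Contraction rewrites as $a \to a = 1_a$. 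The third is a swap law, $(a \to b) \to (b \to a) = b \to a$: Exchange turns the left-hand side into $b \to \bigl((a \to b) \to a\bigr)$, and Contraction reduces the inner term to $a$.

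These assemble as follows. Using the second identity three times (as stated, with $a$ and $b$ interchanged, and applied to the pair $a \to b$, $b \to a$) together with the third identity, one gets
\[
1_a = 1_{a \to b} = 1_{(a \to b) \to (b \to a)} = 1_{b \to a} = 1_b ,
\]
so $1 := a \to a$ is unambiguous, and we may henceforth write $1$. Then $1 \to b = (b \to b) \to b = b$ is a direct instance of Contraction, and $b \to 1 = b \to (b \to b) = b \to b = 1$ is a direct instance of the first auxiliary identity.

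The only genuinely delicate point is the independence of $a \to a$ from $a$: comparing $a \to a$ with $b \to b$ head-on, or trying to argue that $1_a$ is a greatest element, runs in circles. The resolution is the observation that $1$ is preserved when the argument of $\to$ is replaced by $a \to b$, together with the fact that $a$, $a \to b$, $b \to a$ and $b$ are linked by a short chain of such replacements. Everything else is routine term rewriting.
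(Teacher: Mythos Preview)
Your proof is correct. Each of the three auxiliary identities is derived cleanly from the axioms, and the chain $1_a = 1_{a\to b} = 1_{(a\to b)\to(b\to a)} = 1_{b\to a} = 1_b$ is valid: the middle equality uses that $(a\to b)\to(b\to a)$ and $b\to a$ are literally the same element by your third identity, while the others are instances of your second identity. The concluding computations $1\to b = (b\to b)\to b = b$ and $b\to 1 = b\to(b\to b) = b\to b = 1$ then follow at once.

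As for comparison: the paper does not actually supply a proof of this proposition. It is stated with attribution to Abbott and a citation to his original paper, so there is no in-text argument to compare against. Your self-contained derivation is therefore a genuine addition rather than a reworking of anything in the paper; the device of showing $1_{a\to b}=1_a$ and then bridging from $a$ to $b$ via the chain $a,\ a\to b,\ b\to a,\ b$ is a neat way to avoid any appeal to an order structure, which the paper only introduces afterwards (Proposition~\ref{prop:order}) and in fact relies on the present proposition to define.
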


This also gives us

\begin{proposition}\label{prop:order}
For an implication algebra $(A, \to)$, we can define a partial order as
$$a \leq b \Leftrightarrow (a \to b) = 1$$
\end{proposition}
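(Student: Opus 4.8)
The plan is to verify directly that the relation $\leq$ defined by $a \leq b \Leftrightarrow (a \to b) = 1$ is reflexive, antisymmetric, and transitive. Reflexivity is immediate: by Proposition~\ref{prop:abbottid} the constant $1$ is exactly $a \to a$, so $(a \to a) = 1$ and hence $a \leq a$ for every $a$. For antisymmetry, suppose $a \leq b$ and $b \leq a$, i.e.\ $(a \to b) = 1$ and $(b \to a) = 1$. Quasi-commutativity gives $(a \to b) \to b = (b \to a) \to a$, so substituting the hypotheses yields $1 \to b = 1 \to a$; since $1 \to x = x$ for all $x$ (Proposition~\ref{prop:abbottid}), this gives $b = a$.

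The real content is transitivity. Assume $(a \to b) = 1$ and $(b \to c) = 1$; the goal is $(a \to c) = 1$. The idea is to rewrite the term $a \to c$ using the two identities $1 \to x = x$ and $x \to 1 = 1$ from Proposition~\ref{prop:abbottid} together with exchange and quasi-commutativity, arranging matters so that each hypothesis gets used exactly once. Concretely, I would compute
\[
a \to c \;=\; 1 \to (a \to c) \;=\; (b \to c) \to (a \to c) \;=\; a \to \bigl((b \to c) \to c\bigr),
\]
where the second equality uses $b \to c = 1$ and the third is an instance of exchange pulling $a$ to the front; then rewrite $(b \to c) \to c$ as $(c \to b) \to b$ by quasi-commutativity, apply exchange once more to reach $(c \to b) \to (a \to b)$, and finally use $a \to b = 1$ followed by $x \to 1 = 1$ to obtain $1$.

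The step I expect to be the main obstacle is finding the correct order of rewrites in the transitivity argument, since many natural-looking manipulations are circular: using $a \to b = 1$ to collapse a subterm and then having exchange reintroduce it leaves one back where one started. The trick is to spend the hypothesis $b \to c = 1$ \emph{first}, purely to manufacture the term $(b \to c) \to (a \to c)$ out of $a \to c$, and only then to shuffle the nested implications via exchange and quasi-commutativity into a shape where $a \to b = 1$ delivers the conclusion. Once this sequence is identified, every individual step is a routine substitution of an axiom or of an identity from Proposition~\ref{prop:abbottid}, so no genuine calculation remains.
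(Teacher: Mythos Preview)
Your proof is correct and entirely self-contained: each of reflexivity, antisymmetry, and transitivity is obtained directly from the implication-algebra axioms together with the identities $1 \to x = x$ and $x \to 1 = 1$ of Proposition~\ref{prop:abbottid}, and the chain of rewrites you give for transitivity checks out line by line.

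The paper proceeds differently. Rather than verifying the three order axioms directly, it introduces the operation $a + b = (a \to b) \to b$, argues (citing Abbott for associativity) that $+$ is a commutative idempotent associative operation with top $1$, so that $a + b = b$ defines a partial order, and then shows $a + b = b \Leftrightarrow a \to b = 1$. The trade-off is this: the paper's route establishes the join-semilattice structure along the way, which is genuinely useful later (the definition of prime filter in Definition~\ref{prime} and the representation theorem both use $a + b$), but it outsources the associativity of $+$ to an external reference. Your argument is more elementary and needs no citation, at the cost of not exhibiting the semilattice; in particular your transitivity computation is a nice direct derivation that the paper does not give.
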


\begin{proof}
Let $a+b = (a \to b) \to b$. It is commutative by quasi-commutativity, idempotent by contraction, has $1$ as the top by Proposition~\ref{prop:abbottid}, and can be shown associative (see \cite{10.2307/43679502}[Theorem 12]). So $a+b=b$ forms a partial order. Observe that if $a+b=b$ then $a \to b = a \to ((a \to b) \to b) = (a \to b) \to (a \to b) = 1$. If $a \to b = 1$ then $(a \to b) \to b = 1 \to b = b$.
\qed\end{proof}

\begin{definition}
Let $\top \subseteq X \times X$ be a binary relation. Define $\A(\top) = (\wp(\top), \to)$ where $\to$ is interpreted as proper Boolean implication defined below
$$a \to b = (\top \setminus a) \cup b$$
\end{definition}

One can check that $\A(\top) \in \IA $. Although $\top$ is conventionally an arbitrary maximal relation, this is not the only possible interpretation of the $\to$ operation for binary relations. We say that the implication operator is \emph{absolute} if we require $\top = X \times X$, else we say that it is \emph{relative}.

We say that $\A \in \IA$ is \emph{representable} if and only if it embeds into $\A(\top)$ for some $\top \subseteq X \times X$. The embedding (usually denoted $h$) is called a \emph{representation}. If $\A$ embeds into $\A(\top)$ and $\top$ is over a finite base $X$, then we say $\A$ is also \emph{finitely representable}. 

Another standard presentation of implication algebras is $\A = (A,\to,1)$. However, the constant $1$ can be defined as $a \to a$, for any $a$. Furthermore, the quasi-commutativity axiom is a consequence of the fact that $(a \to b) \to b$ is equivalent to the Boolean join of $a+b$.

\begin{proposition}
\label{prop:jointop}
Let $\A = (A,\to) \in \IA$ be representable via $h$. Then $h((a \to b) \to b) = h(a) \cup h(b)$ and $h(1) = h(a \to a) = \top$, for any $a,b \in A$.
\end{proposition}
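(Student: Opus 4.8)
The plan is to unfold the definition of a representation and compute directly inside the concrete algebra $\A(\top)$, using only that $h$ is a homomorphism together with elementary facts about set difference and union in $\wp(\top)$. No further appeal to the implication-algebra axioms is needed beyond the already-established identity $1 = a \to a$ from Proposition~\ref{prop:abbottid}.

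First I would dispatch $h(1)$. Since $h$ is a homomorphism and $1 = a \to a$ for any $a$, we have $h(1) = h(a \to a) = h(a) \to h(a) = (\top \setminus h(a)) \cup h(a)$; as $h(a) \in \wp(\top)$, that is $h(a) \subseteq \top$, this union equals $\top$.

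For the join identity I would apply the homomorphism property twice. Writing $p = h(a)$ and $q = h(b)$, both subsets of $\top$, we get $h(a \to b) = (\top \setminus p) \cup q$ and hence
\[ h((a \to b) \to b) = \big(\top \setminus ((\top \setminus p) \cup q)\big) \cup q. \]
By De Morgan within $\wp(\top)$ the bracketed complement is $(\top \setminus (\top \setminus p)) \cap (\top \setminus q) = p \cap (\top \setminus q) = p \setminus q$, where the step $\top \setminus (\top \setminus p) = p$ uses $p \subseteq \top$. Absorbing then gives $(p \setminus q) \cup q = p \cup q$, i.e.\ $h((a \to b) \to b) = h(a) \cup h(b)$.

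The computation is entirely routine; the only point that requires a moment's care — and it is the sole obstacle, such as it is — is the relativisation, since every complement is taken with respect to $\top$ rather than some ambient $X \times X$. One must therefore invoke $h(a) \subseteq \top$ precisely at the two places where a double complement is collapsed ($\top \setminus (\top \setminus h(a)) = h(a)$); with relative implication this is the only place where the structure of the representation genuinely enters.
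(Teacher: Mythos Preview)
Your proof is correct and follows essentially the same route as the paper: apply the homomorphism property of $h$, unfold the concrete definition of $\to$ in $\A(\top)$, and simplify via De Morgan and the fact that $h(a)\subseteq\top$. The only cosmetic difference is that the paper finishes the second computation by distributing the union over the intersection to reach $(h(a)\cup h(b))\cap\top$, whereas you invoke the absorption $(p\setminus q)\cup q=p\cup q$ directly; both are the same elementary set-theoretic step.
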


\begin{proof}
Since $1 = a \to a$ and $h$ is a representation we get $h(1) = h(a \to a) = h(a) \to h(a) = (\top \setminus h(a)) \cup h(a) = \top$.

By $h$ being a representation, DeMorgan's law, and $a \cap \top = a$ we also have $h((a \to b) \to b) = (h(a) \to h(b)) \to h(b) = \top \setminus((\top \setminus h(a)) \cup b) \cup h(b) = (h(a) \cap (\top\setminus h(b))) \cup h(b) = (h(a) \cup h(b)) \cap ((\top\setminus h(b)) \cup h(b)) = (h(a) \cup h(b)) \cap \top = h(a) \cup h(b)$.
\qed\end{proof}

We now direct our attention to what happens when we add a semigroup operation ($;$) to the signature.

\begin{definition}
An \emph{implication semigroup} $\s$ is a tuple $(S, \to, ;)$, with a carrier set $S$ and $\to, ;$ binary operations on $S$ where

\begin{enumerate}[label=(\roman*)]
        \item $(S, \to)$ is an implication algebra
        \item $(S, ;)$ is a semigroup
        \item $((a \to b) \to b);c = (a;c \to b;c) \to b;c$ (Left quasi-additivity)
        \item $c;((a \to b) \to b) = (c;a \to c;b) \to c;b$ (Right quasi-additivity)
    \end{enumerate}
\end{definition}

The class of implication semigroups will be called $\IS$. Similarly to $\IA$ we also examine structures where the carrier set is a set of binary relations.

\begin{definition}
Let $\top \subseteq X \times X$ be a transitive binary relation. Define $\s(\top) = (\wp(\top), \to, ;)$ where $\to$ is interpreted as proper Boolean implication and $;$ as proper relational composition defined as
$$a ; b = \{(x,z) \mid \exists y \in X: (x,y)\in a, (y,z) \in b\} $$
\end{definition}

Again checking $\s(\top) \in \IS$ is relatively straightforward, note that they are closed under composition due to the transitivity of $\top$. Similarly to $\IA$, $\s \in \IS$ is (finitely) representable if it embeds into $\s(\top)$ for some transitive $\top$ (over a finite base).
 
\section{Basic Theory, Stone Representation, and Decidability for Implication Algebras}
\label{sec:impAl}
We now present the basic theory of implication algebras, the implicational fragment of the implication semigroups discussed in the previous section. We first consider the more general \emph{positive} implication algebras, subsuming the implication algebras. This culminates in a representation theorem for implication algebras, informing our construction in Section~\ref{sec:undec}. \footnote{The representation result for implication algebras appears to have been known to Diego \cite{diego1965algebras}, perhaps Abbott \cite{10.2307/43679502}, but the proof is given in full by Rasiowa in \cite{Rasiowa1974-RASAAA-2}. It was probably known to several others throughout different traditions of algebraic logic.} 

The axiomatics here are largely in \cite{10.2307/43679502} and \cite{Rasiowa1974-RASAAA-2} with some corrections and modifications. Their presentations of the implication algebras are quite different, Abbott preferring an equational presentation where Rasiowa utilises a quasiequational definition.


\begin{definition}[Rasiowa 2]\label{posimpalg}
A \emph{positive} implication algebra\footnote{Also known as a Hilbert algebra.} (Postive $\IA$) is a pair $(A, \to, 1)$\footnote{With this axiomatisation we cannot omit $1$ from the signature. Alternatively, $1$ could be replaced with $a \to a$ and an extra axiom added as $a \to a = b \to b$.}, a set $A$ and $\to$ satisfying:
\begin{enumerate}[label={(P\arabic*)}]
    \item \label{it:p1} $a \to (b \to a) = 1$
    \item \label{it:p2} $(a \to (b \to c)) \to ((a \to b) \to (a \to c)) = 1$
    \item \label{it:p3} if $a \to b = 1$ and $b \to a = 1$ then $a = b$
    \item \label{it:p4} $a \to 1 = 1$
\end{enumerate}
\end{definition}

Without proof, we state the following lemmas. For proofs, refer to \cite{Rasiowa1974-RASAAA-2}.

\begin{proposition}[Rasiowa 2(1)]\label{modusponens}
In any positive implication algebra, the following condition is fulfilled:
if $a \to b = 1$ and $a = 1$, then $b = 1$. Also, if $a = 1$, then $b \to a = 1$ for any $b \in A$.
\end{proposition}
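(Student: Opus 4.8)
The plan is to derive both halves of the statement directly from the axioms of a positive implication algebra, the key ingredient being the antisymmetry axiom \ref{it:p3} together with \ref{it:p4}.

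I would dispatch the second half first, since it is immediate: assuming $a = 1$, the term $b \to a$ is literally $b \to 1$, and \ref{it:p4} gives $b \to 1 = 1$, so $b \to a = 1$ for every $b \in A$.

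For the first half — the modus ponens statement — I would argue as follows. From the hypotheses $a \to b = 1$ and $a = 1$, substitution yields $1 \to b = 1$. Separately, instantiating \ref{it:p4} at the element $b$ gives $b \to 1 = 1$. Then I would invoke \ref{it:p3} on the pair $(1, b)$: having both $1 \to b = 1$ and $b \to 1 = 1$, antisymmetry forces $1 = b$, which is exactly the desired conclusion $b = 1$.

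I do not expect any genuine obstacle here; the proposition is essentially a warm-up exercise in the axiomatics. The one thing to get right is to reach for \ref{it:p3} with $1$ playing the role of one of its two arguments, rather than being tempted to first establish the identity $1 \to b = b$ and read off $b = 1$ from it — that identity is appreciably more involved, and is in fact normally obtained \emph{as a consequence} of modus ponens (via \ref{it:p1} and \ref{it:p3}), so leaning on it here would be circular.
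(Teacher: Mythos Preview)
Your argument is correct. The second half is indeed immediate from \ref{it:p4}, and for the first half your use of \ref{it:p3} on the pair $(1,b)$, having obtained $1 \to b = 1$ by substitution and $b \to 1 = 1$ from \ref{it:p4}, is exactly right. Your remark about avoiding the identity $1 \to b = b$ is also well taken: that identity appears later in the paper as Proposition~\ref{basiclemma}\ref{it:bl3}, and relying on it here would indeed invert the intended order of development.

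As for comparison with the paper: there is nothing to compare. The paper explicitly states this proposition without proof, deferring to Rasiowa's monograph. Your short derivation would serve perfectly well as the missing argument.
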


\begin{proposition}[Rasiowa 2.2]\label{posimpinequationtoimp}
For any positive $\IA$ $\mathfrak{A}$, for all $a, b \in A$, we can define a partial order $\leq$ on $A$ as
$$a \leq b \Longleftrightarrow a \to b = 1$$
and $1 = c \to c$ for all maximal $c$ in the poset $(A, \leq)$.
\end{proposition}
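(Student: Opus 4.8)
The plan is to verify that the relation $\leq$ given by $a \leq b \Leftrightarrow a \to b = 1$ is reflexive, antisymmetric and transitive, and then to pin down $1$ as the greatest element of $(A,\leq)$, which immediately yields the statement about maximal elements. Throughout I would treat \ref{it:p1}--\ref{it:p4} as a rewriting toolkit and lean on Proposition~\ref{modusponens}, using its first clause (detachment: $a\to b = 1$ and $a = 1$ give $b = 1$) and its second clause ($a = 1$ forces $b \to a = 1$), exactly as one reasons in Hilbert algebras; this is the same style of argument underlying Proposition~\ref{prop:order}, but now the axioms are quasiequational rather than equational.

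The only step that is not pure bookkeeping is reflexivity, $a \to a = 1$. First I would instantiate \ref{it:p2} with $b \to a$ in the role of $b$ and $a$ in the role of $c$, obtaining $\bigl(a \to ((b\to a)\to a)\bigr) \to \bigl((a \to (b\to a)) \to (a\to a)\bigr) = 1$. By \ref{it:p1} (applied with $b\to a$ as its inner parameter) the antecedent $a \to ((b\to a)\to a)$ equals $1$, so the first clause of Proposition~\ref{modusponens} detaches $(a\to(b\to a)) \to (a\to a) = 1$. Since $a \to (b\to a) = 1$ by \ref{it:p1} again, a second detachment yields $a\to a = 1$, i.e. $a \leq a$.

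Transitivity is then routine: from $b\to c = 1$ the second clause of Proposition~\ref{modusponens} gives $a \to (b\to c) = 1$, feeding this into \ref{it:p2} and detaching gives $(a\to b)\to(a\to c) = 1$, and since $a\to b = 1$ one further detachment gives $a\to c = 1$. Antisymmetry is literally \ref{it:p3}. Hence $(A,\leq)$ is a poset; \ref{it:p4} reads $a \leq 1$ for all $a$, so $1$ is its greatest element and therefore its unique maximal element, and combining this with reflexivity (which gives $c\to c = 1$ for every $c$, in particular for $c = 1$) establishes $1 = c \to c$ at every maximal $c$. I expect the sole obstacle to be spotting the instance of \ref{it:p2} that drives the reflexivity derivation; once $a \to a = 1$ is available, everything else is mechanical detachment.
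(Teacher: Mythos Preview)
Your argument is correct. The paper does not supply its own proof of this proposition: immediately before it, the authors write ``Without proof, we state the following lemmas. For proofs, refer to \cite{Rasiowa1974-RASAAA-2}.'' So there is nothing in the paper to compare against; your derivation is the standard Hilbert-algebra argument and would be a perfectly acceptable fill-in.

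One small remark on the final clause: since reflexivity already gives $c \to c = 1$ for \emph{every} $c$, the restriction to maximal $c$ in the statement is vacuous, and you do not even need to observe that $1$ is the unique maximal element to conclude $1 = c \to c$ at maximal $c$. Your proof notes this, but the parenthetical about $c = 1$ is superfluous.
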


\begin{proposition}[Rasiowa 2.3]\label{basiclemma}
 The following hold in any positive implication algebra:
\begin{enumerate}[label={(\arabic*)}]
    \item \label{it:bl1} If $a \leq b \to c$ then $b \leq a \to c$
    \item \label{it:bl2} $a \leq (a \to b) \to b$
    \item \label{it:bl3} $1 \to a = a$
    \item \label{it:bl4} If $b \leq c$, then $a \to b \leq a \to c$
    \item \label{it:bl5} If $a \leq b$  then $b \to c \leq a \to c$
    \item \label{it:bl6} $a \to (b \to c) = b \to (a \to c)$
\end{enumerate}
\end{proposition}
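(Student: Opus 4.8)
The plan is to establish the six items in the order listed, each by a short derivation that combines the axioms of Definition~\ref{posimpalg} with the modus ponens rule of Proposition~\ref{modusponens} ($x \to y = 1$ and $x = 1$ imply $y = 1$) and the basic order facts from Proposition~\ref{posimpinequationtoimp}: reflexivity and transitivity of $\le$, that $1$ is its top element, and antisymmetry --- which is axiom (P3) restated.

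Item~\ref{it:bl1} is the workhorse, and I would prove it first. Given $a \le b \to c$, that is $a \to (b \to c) = 1$, instantiate (P2) as $(a \to (b \to c)) \to ((a \to b) \to (a \to c)) = 1$ and apply modus ponens to get $(a \to b) \to (a \to c) = 1$, i.e.\ $a \to b \le a \to c$; since $b \le a \to b$ is an instance of (P1), transitivity gives $b \le a \to c$. (Iterating \ref{it:bl1} shows $a \le b \to c \Leftrightarrow b \le a \to c$.) Item~\ref{it:bl2} is then immediate: apply \ref{it:bl1} to the reflexive instance $a \to b \le a \to b$ to obtain $a \le (a \to b) \to b$. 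For item~\ref{it:bl3}, observe that $a \to (1 \to a) = 1$ is an instance of (P1), while \ref{it:bl2} with $1,a$ in place of $a,b$ gives $1 \to ((1 \to a) \to a) = 1$, so modus ponens yields $(1 \to a) \to a = 1$; antisymmetry (P3) then forces $1 \to a = a$.

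Items \ref{it:bl4} and \ref{it:bl5} are the two monotonicity rules. For \ref{it:bl4}: if $b \le c$ then $b \to c = 1$, hence $a \to (b \to c) = a \to 1 = 1$ by (P4), and feeding this into the (P2)-instance via modus ponens gives $(a \to b) \to (a \to c) = 1$. For \ref{it:bl5}: from $a \le b$ and the instance $b \le (b \to c) \to c$ of \ref{it:bl2}, transitivity gives $a \le (b \to c) \to c$, and \ref{it:bl1} turns this into $b \to c \le a \to c$. Finally, for item~\ref{it:bl6}: by (P2) we have $a \to (b \to c) \le (a \to b) \to (a \to c)$; by (P1), $b \le a \to b$, so \ref{it:bl5} gives $(a \to b) \to (a \to c) \le b \to (a \to c)$; transitivity yields $a \to (b \to c) \le b \to (a \to c)$, and interchanging $a$ and $b$ together with antisymmetry gives equality.

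The only steps that are not purely mechanical are, first, recognising that \ref{it:bl1} must be obtained by pairing the (P2)-instance and modus ponens with the (P1)-fact $b \le a \to b$ and transitivity --- every later item rests on \ref{it:bl1} --- and, second, the move in \ref{it:bl6} of routing through $(a \to b) \to (a \to c)$ via (P2) and then \emph{weakening the antecedent} from $a \to b$ down to $b$ using (P1) and the antitone rule \ref{it:bl5}. I expect this last point to be the main obstacle; once ``weaken the antecedent'' is the guiding idea, the remaining manipulations are routine.
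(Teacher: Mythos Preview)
Your proof is correct in every detail; each of the six derivations goes through exactly as you outline. Note, however, that the paper does not actually supply a proof of this proposition: it is one of the results explicitly stated ``without proof'' with a reference to Rasiowa~\cite{Rasiowa1974-RASAAA-2}, so there is nothing in the paper to compare against beyond observing that your argument is a faithful reconstruction of the standard one found there.
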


\begin{proposition}[Distributivity]\label{distr} In any (positive) implication algebra $\mathfrak{A} = (A, \to, 1)$, we have
$a \to (b \to c) = (a \to b) \to (a \to c)$
\end{proposition}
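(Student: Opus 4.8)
The plan is to prove the two inequalities $a \to (b \to c) \leq (a \to b) \to (a \to c)$ and $(a \to b) \to (a \to c) \leq a \to (b \to c)$ separately — where $\leq$ is the partial order of Proposition~\ref{posimpinequationtoimp} — and then invoke antisymmetry, axiom~\ref{it:p3}, to conclude the equality. The first inequality is immediate: unwinding the definition $x \leq y \Leftrightarrow x \to y = 1$, it is exactly the assertion that axiom~\ref{it:p2} evaluates to $1$.

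For the reverse inequality I would first observe that $b \leq a \to b$ holds for all $a,b$; this is axiom~\ref{it:p1} after swapping the roles of $a$ and $b$. Feeding this into Proposition~\ref{basiclemma}, item~\ref{it:bl5} (antitonicity of $\to$ in its first argument) with the triple $(b,\ a \to b,\ a \to c)$ gives $(a \to b) \to (a \to c) \leq b \to (a \to c)$. Finally the exchange law, Proposition~\ref{basiclemma}, item~\ref{it:bl6}, rewrites the right-hand side as $b \to (a \to c) = a \to (b \to c)$. Chaining these yields $(a \to b) \to (a \to c) \leq a \to (b \to c)$, as required, and antisymmetry of $\leq$ then closes the argument.

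There is no substantial obstacle here; the work is entirely in bookkeeping. The two points to be careful about are: (i) that $\leq$ really is antisymmetric — this is precisely the content of axiom~\ref{it:p3}, and is the reason the proof cannot be shortened by dropping that axiom; and (ii) tracking the variable substitutions so that axiom~\ref{it:p1} and the two items of Proposition~\ref{basiclemma} are each applied to the correct instance. One could instead grind out a purely equational derivation of the converse direction and combine it with~\ref{it:p2}, but routing through the order-theoretic lemmas already established is the most economical route.
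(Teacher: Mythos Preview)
Your argument is correct and is essentially the paper's own proof: both directions are handled the same way, using \ref{it:p2} for $a \to (b \to c) \leq (a \to b) \to (a \to c)$, and the chain $b \leq a \to b$ (from \ref{it:p1}) together with Proposition~\ref{basiclemma}\ref{it:bl5},\ref{it:bl6} for the reverse inequality, then concluding by antisymmetry. The only difference is expository---you spell out the variable substitutions and the role of \ref{it:p3} more explicitly.
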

\begin{proof}
We have $b \leq a \to b$ by \ref{it:p1} and Proposition~$\ref{posimpinequationtoimp}$. Applying Proposition~\ref{basiclemma}\ref{it:bl5}\ref{it:bl6}, we get
$(a \to b) \to (a \to c) \leq b \to (a \to c) = a \to (b \to c)$. So, $a \to (b \to c) = (a \to b) \to (a \to c)$ follows from \ref{it:p2} and Proposition~\ref{posimpinequationtoimp}.

The proof that distributivity holds in implication algebras is found in \cite[Theorem 5]{10.2307/43679502}.
\qed\end{proof}

We now show that the class of implication algebras lies below the class of positive implication algebras. Although the following proposition is not in Abbott or Rasiowa, it is latent in the published results concerning implicative, positive implication, and implication algebras.

\begin{proposition}\label{2.1f}
 Any implication algebra $(A, \to)$ is a positive implication algebra. 
\end{proposition}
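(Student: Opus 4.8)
The plan is to verify the four axioms \ref{it:p1}--\ref{it:p4} of Definition~\ref{posimpalg} one at a time, taking $1 := a \to a$ as the distinguished constant (well-defined and behaving as required by Proposition~\ref{prop:abbottid}), and using only the three implication-algebra identities together with the already-established Propositions~\ref{prop:abbottid}, \ref{prop:order} and \ref{distr}.

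Axiom \ref{it:p4}, namely $a \to 1 = 1$, is literally the identity $b \to 1 = 1$ of Proposition~\ref{prop:abbottid}. For \ref{it:p1} I would use Exchange to move $a$ past $b$, obtaining $a \to (b \to a) = b \to (a \to a) = b \to 1 = 1$, the last step again by Proposition~\ref{prop:abbottid}. For \ref{it:p3}, suppose $a \to b = 1$ and $b \to a = 1$; the computation already carried out in the proof of Proposition~\ref{prop:order} shows $a + b = (a \to b) \to b = 1 \to b = b$ and, symmetrically, $b + a = a$, and since $+$ is commutative this forces $a = b$. Equivalently, $\leq$ as defined in Proposition~\ref{prop:order} is a genuine partial order, hence antisymmetric.

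The only axiom needing more than a one-line rewrite is \ref{it:p2}, $(a \to (b \to c)) \to ((a \to b) \to (a \to c)) = 1$, and this is where I would invoke distributivity: by Proposition~\ref{distr} we have $(a \to b) \to (a \to c) = a \to (b \to c)$, whence the displayed term has the form $x \to x$ and equals $1$ by Proposition~\ref{prop:abbottid}. So the genuine content of the proposition is concentrated in distributivity; once that is on the table, the reduction to the positive-implication-algebra axioms is purely mechanical.

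The point to watch — and the only place where anything could go wrong — is the order of dependencies for \ref{it:p2}. Distributivity must be used in its implication-algebra form, i.e. as Abbott's \cite[Theorem~5]{10.2307/43679502} (the implication-algebra half of Proposition~\ref{distr}), which is established directly from the Abbott axioms; invoking the positive-implication-algebra proof given after Proposition~\ref{distr} would make the argument circular, since that proof uses \ref{it:p1} and \ref{it:p2} themselves. Everything else is routine manipulation of the three defining identities and the facts about $1$.
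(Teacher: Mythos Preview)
Your proof is correct and follows essentially the same route as the paper: \ref{it:p1} via Exchange and Proposition~\ref{prop:abbottid}, \ref{it:p2} via distributivity plus $x\to x=1$, \ref{it:p3} via the antisymmetry in Proposition~\ref{prop:order}, and \ref{it:p4} directly from Proposition~\ref{prop:abbottid}. Your explicit flag about the circularity risk in \ref{it:p2} --- that one must invoke the implication-algebra half of Proposition~\ref{distr} (Abbott's \cite[Theorem~5]{10.2307/43679502}) rather than the positive-implication-algebra argument --- is a useful clarification that the paper leaves implicit.
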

\begin{proof}
\ref{it:p1} follows from the exchange axiom and Proposition~\ref{prop:abbottid}, more specifically $a \to (b \to a) = b \to (a \to a) = b \to 1 = 1$. For \ref{it:p2} follows from Proposition~\ref{distr} and Proposition~\ref{prop:abbottid}. For \ref{it:p3} see that by Proposition~\ref{prop:order} we have the anti-symmetry for the partial order in implication algebras. Finally \ref{it:p4} follows directly from Proposition~\ref{prop:abbottid}.
\qed\end{proof}

\begin{proposition}
    Any positive implication algebra $(A, \to)$ satisfying
$$(a\to b) \to a = a$$
    for all $a,b \in A$ is an implication algebra.\footnote{Note that the contraction identity is not provable from the axioms \ref{it:p1}--\ref{it:p4}, a counterexample can be found using Mace4.}
\end{proposition}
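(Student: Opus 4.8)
The plan is to check the three defining identities of an implication algebra directly. Contraction, $(a\to b)\to a = a$, is precisely the hypothesis; exchange, $a\to(b\to c) = b\to(a\to c)$, is Proposition~\ref{basiclemma}\ref{it:bl6}, which holds in every positive implication algebra. So the whole task reduces to deriving quasi-commutativity, $(a\to b)\to b = (b\to a)\to a$, from the positive-implication-algebra axioms together with the contraction hypothesis.

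For quasi-commutativity I would invoke antisymmetry (\ref{it:p3}): it suffices to prove $\big((a\to b)\to b\big)\to\big((b\to a)\to a\big) = 1$, since the reverse direction then follows by interchanging $a$ and $b$. Writing $p = a\to b$ and $q = b\to a$, the crucial step is to use contraction in the form $p\to a = a$ to rewrite $(p\to b)\to a = (p\to b)\to(p\to a)$, and then apply distributivity (Proposition~\ref{distr}) to obtain $(p\to b)\to a = p\to(b\to a) = p\to q$. Combining this with the exchange identity gives $(p\to b)\to(q\to a) = q\to\big((p\to b)\to a\big) = q\to(p\to q)$, and $q\to(p\to q) = 1$ is an instance of \ref{it:p1}. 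Unwinding the abbreviations yields the desired identity, and antisymmetry finishes the argument.

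The one point requiring a little ingenuity is recognising that contraction must be applied "from the inside out" — replacing the exposed $a$ by $p\to a$ so that distributivity becomes applicable. Without this, the two natural inequalities between $(a\to b)\to b$ and $(b\to a)\to a$ are interderivable through Proposition~\ref{basiclemma}\ref{it:bl1} alone and one makes no genuine progress; the contraction hypothesis is exactly what breaks the symmetry. Everything else is a routine chain of the basic lemmas already recorded in Propositions~\ref{modusponens}--\ref{distr}, so I expect no further obstacle.
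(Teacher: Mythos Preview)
Your argument is correct. Both you and the paper reduce the task to quasi-commutativity and then establish one direction of the inequality, appealing to symmetry for the other. The tactical routes differ, however: the paper works order-theoretically, starting from the trivial bound $a\to b \leq 1 = b\to((b\to a)\to b)$ and pushing it through repeated applications of the ``flip'' lemma Proposition~\ref{basiclemma}\ref{it:bl1}; your argument is purely equational, using contraction to substitute $a = p\to a$ inside $(p\to b)\to a$, then distributivity (Proposition~\ref{distr}) to collapse $(p\to b)\to(p\to a)$ into $p\to q$, and finishing with exchange and \ref{it:p1}. Your version is tidier and makes the role of the contraction hypothesis more transparent---distributivity becomes applicable precisely because contraction supplies the missing $p\to{}$---whereas the paper's version stays closer to the monotone-inequality style of the surrounding lemmas. (The paper's printed chain of inequalities also contains what look like typos in the target expression, so your explicit computation serves as a useful independent verification.)
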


\begin{proof}
To show the other direction, let $(A, \to,1)$ be a positive implication algebra satisfying $(a \to b) \to a = a$. The first axiom of implication algebras $(a \to b) \to a = a$ we have already assumed adjoined to the algebra, and the third axiom, $a \to (b \to c) = b \to (a \to c)$, is found in Proposition~\ref{basiclemma}\ref{it:bl6}. To show the second axiom: $(a \to b) \to b = (b \to a) \to a$, we note $a \to b \leq 1 = (b \to b) = (b \to a) \to (b \to b) = b \to ((b \to a) \to  b)$ by Proposition~\ref{posimpinequationtoimp} and Proposition~\ref{basiclemma}\ref{it:bl6}. By Proposition~\ref{basiclemma}\ref{it:bl1} we have $b \leq (a \to b) \to ((b \to a) \to  b)$ and thus by Proposition~\ref{basiclemma}\ref{it:bl1} and \ref{it:bl3} we get $(a \to b) \to b \leq ((a \to b) \to (a \to b)) \to ((b \to a) \to  b) = 1 \to ((b \to a) \to  b) = ((b \to a) \to  b)$. By a completely analogous argument, $(a \to b) \to b \leq (b \to a) \to a$. Hence $(a \to b) \to b = (b \to a) \to a$ as desired.
\qed\end{proof}

In anticipation of the Stone-style representation theorem, we define some required notions like that of an implicative filter.

\begin{definition}[Abbott]\label{defoffilter}
An implicative filter of a (positive) implication algebra $\mathfrak{A} = (A,\to)$ is a subset $F \subseteq A$ such that: 

\begin{enumerate}[label=(\roman*)]
    \item $1 \in F$
    \item if $a \in F$ and $a \to b \in F$ then $b \in F$ 
\end{enumerate}
\end{definition}

\begin{definition}\label{prime}
We say that an implicative filter $F$ is \emph{proper} if $F \neq A$. We say that a proper implicative filter is \emph{irreducible} if it is not the intersection of two proper implicative filters distinct from it, or formally: $F$ is irreducible if for any two proper implicative filters $F_{1}$, $F_{2}$ such that $F = F_{1} \cap F_{2}$, either $F = F_{1}$ or $F = F_{2}$. Finally, a proper implicative filter $F$ is said to be \emph{prime} if $a + b  \in F$ (or equivalently  $(a \to b) \to b \in F$) implies that either $a \in F$ or $b \in F$, for all $a,b \in A$.
\end{definition}



The proof of the Stone-like Representation theorem follows the following steps. For proofs, refer to \cite{Rasiowa1974-RASAAA-2}.


\begin{proposition}[Rasiowa 1.8]\footnote{Rasiowa states this result for implicative algebras, the weakest algebra she considers in her text. Since all implication algebras are positive implication algebras, and all positive implication algebras are implicative algebras, we can specialise her result for the present case.} 
\label{1.8} If in any (positive) implication algebra $\mathfrak{A} = (A, \to)$ one of the following
conditions is satisfied for all $a, b, c \in A$:
\begin{enumerate}[label=(F\arabic*)]
    \item $(a \to (b \to c)) \to a \to b) \to (a \to c)) = 1$
    \item $(a \to b) \to (a \to (b \to c)) \to (a \to c)) = 1$
\end{enumerate}

then for every implicative filter $F$ in $\mathfrak{A}$ and for every $a \in A$, the set $F_a*
= \{x \in A: a \to x \in F\}$ is an implicative filter. If, moreover, for all $a,b \in A: a \to (b \to a) = 1$, then $F_a*$ is the \emph{least} implicative filter containing $F$ and $a$.
\end{proposition}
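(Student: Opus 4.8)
The plan is to verify directly that $F_a^* = \{x \in A : a \to x \in F\}$ satisfies the two closure conditions of Definition~\ref{defoffilter}, then separately establish the ``least filter'' claim. For the first condition, $1 \in F_a^*$ amounts to checking $a \to 1 \in F$, which is immediate from \ref{it:p4} ($a \to 1 = 1$) and $1 \in F$. The second condition is the heart of the matter: assuming $x \in F_a^*$ and $x \to y \in F_a^*$ — that is, $a \to x \in F$ and $a \to (x \to y) \in F$ — I must show $a \to y \in F$. This is exactly where hypothesis (F1) or (F2) is used: (F1) says $(a \to (x \to y)) \to ((a \to x) \to (a \to y)) = 1 \in F$, so two applications of the filter's modus-ponens closure (Definition~\ref{defoffilter}(ii)) — first on $a \to (x \to y) \in F$, then on $a \to x \in F$ — deliver $a \to y \in F$, i.e. $y \in F_a^*$. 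The argument from (F2) is the same up to reordering the hypotheses. So $F_a^*$ is an implicative filter.

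For the second part, assume additionally the weakening law $a \to (b \to a) = 1$ for all $a, b$ (in our setting this holds by \ref{it:p1}). I first observe $F \subseteq F_a^*$: if $x \in F$ then, since $x \to (a \to x) = 1 \in F$ by the weakening law, filter-closure gives $a \to x \in F$, so $x \in F_a^*$. Next, $a \in F_a^*$ since $a \to a = 1 \in F$. It remains to show minimality: let $G$ be any implicative filter with $F \subseteq G$ and $a \in G$, and let $x \in F_a^*$, so $a \to x \in F \subseteq G$; since also $a \in G$, closure of $G$ under modus ponens gives $x \in G$. Hence $F_a^* \subseteq G$, so $F_a^*$ is the least implicative filter containing $F$ and $a$.

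I expect the only genuine subtlety to be bookkeeping: making sure the instance of (F1)/(F2) is applied with the right substitution (namely $b \mapsto x$, $c \mapsto y$) and that the two modus-ponens steps are chained in the correct order, since the intermediate term $(a \to x) \to (a \to y)$ must itself be seen to lie in $F$ before the final step. None of the steps require anything beyond the axioms \ref{it:p1}--\ref{it:p4}, Proposition~\ref{posimpinequationtoimp}, and the definition of an implicative filter, so the proof is essentially a careful unwinding of definitions; the main ``obstacle'' is purely notational care rather than any conceptual difficulty. (Indeed, since this is Rasiowa's result quoted for reference, one could simply cite \cite{Rasiowa1974-RASAAA-2}, but the self-contained argument above is short enough to include.)
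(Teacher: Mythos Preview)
Your argument is correct and is the standard unwinding of the definitions: both filter conditions are verified exactly as you describe, the chaining of (F1)/(F2) with two applications of modus ponens in $F$ is the only non-trivial step and you handle it properly, and the minimality argument via $F\subseteq G$, $a\in G \Rightarrow F_a^*\subseteq G$ is clean.

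The paper itself does not give a proof of this proposition at all: it is one of several results stated without proof under the blanket remark ``For proofs, refer to \cite{Rasiowa1974-RASAAA-2}.'' So there is nothing to compare against beyond noting that your self-contained argument is precisely the elementary proof one finds in Rasiowa; including it (rather than merely citing it) is a reasonable choice given how short it is.
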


    \begin{proposition}[Rasiowa 3.4]\label{lemmaimpalgebraposimp}
If $(A, \to)$ is a (positive) implication algebra, then for every implicative filter F and for every element $a \in A$ the set $F_a* = \{x \in A : a \to x \in F \}$ is the least implicative filter containing $F$ and $a$.
\end{proposition}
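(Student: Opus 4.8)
The plan is to obtain this statement as an essentially immediate corollary of Proposition~\ref{1.8}. That proposition requires two things of the algebra: that one of the conditions (F1), (F2) hold, and that $a \to (b \to a) = 1$ for all $a,b \in A$ (this last being precisely what upgrades ``$F_a^*$ is an implicative filter'' to ``$F_a^*$ is the \emph{least} implicative filter containing $F$ and $a$''). I would point out that (F1), i.e. $(a \to (b \to c)) \to ((a \to b) \to (a \to c)) = 1$, is verbatim axiom \ref{it:p2}, and that $a \to (b \to a) = 1$ is verbatim axiom \ref{it:p1}. Since by Proposition~\ref{2.1f} every implication algebra is a positive implication algebra, the hypotheses of Proposition~\ref{1.8} are met in both cases covered by the statement, and the conclusion follows directly.

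For a self-contained argument I would additionally verify the relevant facts by hand. That $F_a^*$ is an implicative filter: $1 \in F_a^*$ because $a \to 1 = 1 \in F$ by \ref{it:p4}; and if $x, x \to y \in F_a^*$, i.e. $a \to x \in F$ and $a \to (x \to y) \in F$, then Proposition~\ref{distr} rewrites the latter as $(a \to x) \to (a \to y) \in F$, whence two uses of the filter's detachment rule give $a \to y \in F$, i.e. $y \in F_a^*$. That $F \cup \{a\} \subseteq F_a^*$: if $x \in F$ then $x \to (a \to x) = 1 \in F$ by \ref{it:p1}, and detachment gives $a \to x \in F$; while $a \in F_a^*$ since $a \to a = 1 \in F$. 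That $F_a^*$ is least: if $G$ is any implicative filter with $F \cup \{a\} \subseteq G$ and $x \in F_a^*$, then $a \to x \in F \subseteq G$ and $a \in G$ force $x \in G$ by detachment, so $F_a^* \subseteq G$.

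There is no real obstacle here; the only thing to watch is the bookkeeping of matching the (parenthesis-garbled) condition (F1) against axiom \ref{it:p2} and confirming that Proposition~\ref{1.8} carries no further hidden hypotheses. All of the genuine work has already been done, in Proposition~\ref{distr} (distributivity) and Proposition~\ref{2.1f} (that implication algebras are positive implication algebras).
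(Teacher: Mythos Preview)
Your proposal is correct. The paper does not supply its own proof of this proposition, deferring instead to Rasiowa; your derivation from Proposition~\ref{1.8} by identifying (F1) with axiom \ref{it:p2} and the extra hypothesis $a \to (b \to a) = 1$ with axiom \ref{it:p1} (together with Proposition~\ref{2.1f} for the implication-algebra case) is exactly the intended route, and the self-contained verification you append is also sound.
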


\begin{proposition}[Rasiowa 6.1]\label{equivbetweenprimeandirr}
An implicative filter in an implication algebra is prime if and only if it is irreducible.
\end{proposition}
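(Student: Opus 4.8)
**Proof proposal for Proposition~\ref{equivbetweenprimeandirr} (prime $\Leftrightarrow$ irreducible in implication algebras).**

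The plan is to prove the two implications separately, using the order-theoretic structure recalled in Section~\ref{sec:impAl}, in particular that $a + b = (a \to b) \to b$ is the join in the poset $(A, \leq)$ (Proposition~\ref{prop:order} and its proof), and Proposition~\ref{lemmaimpalgebraposimp}, which gives us an explicit description of the least implicative filter generated by a filter and an element.

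First I would show \emph{prime $\Rightarrow$ irreducible}, which is the easy direction. Suppose $F$ is prime and $F = F_1 \cap F_2$ with $F_1, F_2$ proper implicative filters; assume toward a contradiction that $F \subsetneq F_1$ and $F \subsetneq F_2$. Pick $a \in F_1 \setminus F$ and $b \in F_2 \setminus F$. Then $a + b = (a \to b) \to b \geq a$, so $a+b \in F_1$ since $F_1$ is upward closed (filters are closed under $\leq$ because $a, a\to c \in F$ with $a \leq c$ forces $c \in F$); symmetrically $a + b \in F_2$. Hence $a + b \in F_1 \cap F_2 = F$, and primeness gives $a \in F$ or $b \in F$, contradicting the choice of $a, b$. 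So $F = F_1$ or $F = F_2$.

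The harder direction is \emph{irreducible $\Rightarrow$ prime}, and this is where I expect the main obstacle. Suppose $F$ is irreducible but not prime: there exist $a, b$ with $a + b \in F$ yet $a \notin F$ and $b \notin F$. Let $F_1$ be the least implicative filter containing $F \cup \{a\}$ and $F_2$ the least implicative filter containing $F \cup \{b\}$; by Proposition~\ref{lemmaimpalgebraposimp} these are $F_1 = \{x : a \to x \in F\}$ and $F_2 = \{x : b \to x \in F\}$. Clearly $F \subseteq F_1 \cap F_2$, and the inclusion is strict at $F_1$ since $a \in F_1 \setminus F$ (and $a \in F_2$? no — here I must be careful). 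The crux is to show $F = F_1 \cap F_2$, i.e. the reverse inclusion: if $a \to x \in F$ and $b \to x \in F$, then $x \in F$. Using that $F$ is an implicative filter and the distributivity/exchange identities, one computes that from $a \to x, b \to x \in F$ one can derive $(a + b) \to x \in F$: indeed in an implication algebra $(a+b) \to x = ((a\to b)\to b)\to x$, and one shows this equals, or is above, something expressible via $a \to x$ and $b \to x$ — concretely $a \to x \leq (a+b) \to x$ fails in general, so instead I would verify the identity $(a \to x) \to ((b \to x) \to ((a+b) \to x)) = 1$ by a direct computation with axioms (i)–(iii) and Proposition~\ref{distr}, which makes $(a+b) \to x$ follow from $a \to x, b \to x \in F$ by two applications of modus ponens (filter closure). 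Since $a + b \in F$, filter closure then yields $x \in F$. Hence $F = F_1 \cap F_2$ with both $F_i$ proper (properness: if $F_1 = A$ then $a \to z \in F$ for all $z$, in particular taking... one derives $b \in F$ or a contradiction — more simply, $F_1$ proper because $b \notin F_1$, as $a \to b \in F$ would combine with $a+b \in F$... this needs checking) and both strictly containing $F$, contradicting irreducibility. Therefore $F$ is prime.

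The step I flag as the real obstacle is establishing the identity $(a \to x) \to ((b \to x) \to ((a+b) \to x)) = 1$ (equivalently, $(a+b) \to x$ lies in every implicative filter containing $a\to x$ and $b \to x$), since the rest is bookkeeping with upward closure and Proposition~\ref{lemmaimpalgebraposimp}; and secondarily, verifying that $F_1$ and $F_2$ are genuinely \emph{proper} (not all of $A$), which should follow from $a+b \in F$ together with $a, b \notin F$ but requires an explicit argument. An alternative, slicker route for irreducible $\Rightarrow$ prime would avoid the explicit identity by instead invoking Proposition~\ref{1.8}: since implication algebras satisfy (F1)/(F2) (a consequence of Proposition~\ref{distr}) and $a \to (b\to a)=1$, the sets $F_a*, F_b*$ are the least filters extending $F$ by $a$, $b$ respectively, and one derives the contradiction from irreducibility by the same filter-intersection computation — this is presumably the path Rasiowa takes, and I would follow it to keep the argument self-contained within the cited results.
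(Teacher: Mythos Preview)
The paper does not supply its own proof of this proposition: it is stated with attribution to Rasiowa, and the reader is directed to her monograph for the argument. There is therefore no in-paper proof to compare your proposal against.

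That said, your outline is the standard one and is correct. For the identity you flag as the main obstacle, use that $+$ is the join in $(A,\leq)$: by two applications of exchange the identity reduces to $a+b \leq (a\to x)\to(b+x)$, and it suffices to bound $a$ and $b$ separately. One has $a \leq (a\to x)\to x \leq (a\to x)\to(b+x)$ by Proposition~\ref{basiclemma}\ref{it:bl2},\ref{it:bl4} together with $x\leq b+x$, while $b \leq b+x \leq (a\to x)\to(b+x)$ since $d\leq c\to d$ always holds. The join property then gives $a+b \leq (a\to x)\to(b+x)$, and two uses of modus ponens in $F$ yield $(a+b)\to x\in F$, hence $x\in F$.

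For properness of $F_1,F_2$, your tentative route via ``$b\notin F_1$'' need not work (nothing forbids $a\to b\in F$), but properness is automatic once $F=F_1\cap F_2$ is established: if $F_1=A$ then $F=A\cap F_2=F_2$, contradicting $b\in F_2\setminus F$; symmetrically for $F_2$.
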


\begin{lemma}[Rasiowa 1.4]\label{irrimplicativefilter}
If $F_{0}$ is an implicative filter in an implicative algebra $\mathfrak{A}$ such that $a_{0} \notin F_{0}$ for some $a_{0} \in \mathfrak{A}$ then there exists an irreducible implicative filter $G$ such that $F_{0} \subset G$ and  $a_{0} \notin G$. 
\end{lemma}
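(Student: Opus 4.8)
The plan is to run the standard Zorn's lemma argument: produce a maximal implicative filter containing $F_0$ and avoiding $a_0$, and then observe that maximality forces irreducibility. First I would form the collection $\mathcal{P}$ of all implicative filters $F$ of $\mathfrak{A}$ with $F_0 \subseteq F$ and $a_0 \notin F$, partially ordered by inclusion. It is nonempty since $F_0$ itself lies in it (by hypothesis $a_0 \notin F_0$). Note that every member of $\mathcal{P}$ is automatically proper, because $a_0 \in A \setminus F$; so once a maximal element of $\mathcal{P}$ is found we need not argue properness separately.

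Next I would check the hypothesis of Zorn's lemma: every chain $\mathcal{C} \subseteq \mathcal{P}$ has an upper bound in $\mathcal{P}$. Take $F^* = \bigcup \mathcal{C}$. Then $F_0 \subseteq F^*$ and $a_0 \notin F^*$ (as $a_0$ lies in no member of $\mathcal{C}$), so it remains to see that $F^*$ is an implicative filter in the sense of Definition~\ref{defoffilter}: we have $1 \in F^*$ since $1$ lies in any one member of $\mathcal{C}$; and if $a, a \to b \in F^*$ then, $\mathcal{C}$ being a chain and hence directed, both witnesses lie in a common $F \in \mathcal{C}$, whence $b \in F \subseteq F^*$. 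Thus $F^* \in \mathcal{P}$ bounds $\mathcal{C}$, and Zorn's lemma yields a maximal $G \in \mathcal{P}$; in particular $G$ is a proper implicative filter with $F_0 \subseteq G$ (which is what $F_0 \subset G$ denotes here, following Rasiowa) and $a_0 \notin G$.

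Finally I would verify that $G$ is irreducible. Suppose for contradiction $G = F_1 \cap F_2$ for proper implicative filters $F_1, F_2$ with $G \neq F_1$ and $G \neq F_2$. From $G = F_1 \cap F_2$ we get $G \subseteq F_1$ and $G \subseteq F_2$, so by distinctness $G \subsetneq F_1$ and $G \subsetneq F_2$, and since $F_0 \subseteq G$ both $F_i$ contain $F_0$. Each $F_i$ is then a proper implicative filter containing $F_0$ and strictly larger than the maximal element $G$ of $\mathcal{P}$, so $F_i \notin \mathcal{P}$; the only remaining possibility is $a_0 \in F_i$. Hence $a_0 \in F_1 \cap F_2 = G$, contradicting $a_0 \notin G$. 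So $G$ is the required irreducible implicative filter.

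There is no deep obstacle here — this is the textbook Zorn construction. The only point deserving care is the upper-bound step: closure of $F^* = \bigcup \mathcal{C}$ under the modus-ponens-style rule of Definition~\ref{defoffilter} uses that $\mathcal{C}$ is \emph{directed}, so that the two witnesses $a$ and $a \to b$ can always be located inside a single member of the chain.
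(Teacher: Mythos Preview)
Your Zorn's lemma argument is correct and is exactly the standard proof; note, however, that the paper does not actually supply its own proof of this lemma but defers to Rasiowa~\cite{Rasiowa1974-RASAAA-2}, whose argument is precisely the one you give. So there is nothing to compare beyond observing that your write-up matches the cited source.
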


Immediately, by Lemma~\ref{irrimplicativefilter} and Proposition~\ref{equivbetweenprimeandirr} we have:

\begin{corollary}\label{primeimplicativefilter}
If $F$ is an implicative filter in an implicative algebra $\mathfrak{A}$ such that $a \notin F$ for some $a \in \mathfrak{A}$ then there exists a prime implicative filter $G$ such that $F \subset G$ and  $a \notin G$.
\end{corollary}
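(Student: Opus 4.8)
The plan is simply to compose the two results cited just before the statement, since the corollary is their immediate consequence. First I would apply Lemma~\ref{irrimplicativefilter} with $F_{0} := F$ and $a_{0} := a$: the hypothesis $a \notin F$ is exactly what the lemma requires, so it produces an \emph{irreducible} implicative filter $G$ with $F \subset G$ and $a \notin G$. Then I would invoke Proposition~\ref{equivbetweenprimeandirr}, which states that an implicative filter is prime if and only if it is irreducible; applied to $G$, this upgrades ``irreducible'' to ``prime''. The resulting $G$ is a prime implicative filter with $F \subset G$ and $a \notin G$, which is precisely the claim, so nothing further is required.

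The one point needing care is a matter of which ambient class each ingredient is phrased for: Lemma~\ref{irrimplicativefilter} is stated for implicative algebras, whereas Proposition~\ref{equivbetweenprimeandirr} is stated for implication algebras. In the setting where we actually use this corollary $\mathfrak{A}$ is an implication algebra, and every implication algebra is (via Proposition~\ref{2.1f} together with the inclusion of positive implication algebras in implicative algebras) also an implicative algebra, so both cited results apply to $\mathfrak{A}$ without modification. Flagging this chain of inclusions at the start of the proof is all the setup that is needed.

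There is no genuine obstacle here — the substance lives entirely in the proofs of the two cited facts (the Zorn-type maximisation behind Lemma~\ref{irrimplicativefilter} and the filter-intersection argument behind Proposition~\ref{equivbetweenprimeandirr}), both of which we take on Rasiowa's authority. If one preferred a self-contained argument, one could instead show directly that the irreducible $G$ obtained from the lemma is prime: for any $x, y$ with $(x \to y) \to y \in G$ but $x \notin G$ and $y \notin G$, use the least-filter description of Proposition~\ref{lemmaimpalgebraposimp} to write $G = G_x* \cap G_y*$ with both factors proper and properly containing $G$, contradicting irreducibility. For the corollary as stated, however, the two-line composition above is the cleanest route, so that is the proof I would record.
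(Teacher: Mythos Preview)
Your proposal is correct and follows exactly the paper's own route: the paper states that the corollary follows ``immediately, by Lemma~\ref{irrimplicativefilter} and Proposition~\ref{equivbetweenprimeandirr}'', which is precisely the two-step composition you describe. Your additional remark about the implicative/implication algebra distinction and the optional direct argument are extra commentary, but the core proof matches the paper's.
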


This next corollary we prove, as it is not found in any of the literature cited above and is required for the representation theorem.

\begin{corollary}\label{primeextensionfailingb}
Let $F$ be an implicative filter of an implication algebra $\mathfrak{A} = (A, \to)$ such that $a \to b \notin F$ for some $a, b \in A$. Then there exists a prime implicative filter $G : F \subseteq G$ such that $a \in G$ and $b \notin G$.\end{corollary}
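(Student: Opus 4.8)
The plan is to reduce the statement to Corollary~\ref{primeimplicativefilter} by first enlarging $F$ to the least implicative filter containing $a$, and then pruning that filter away from $b$. The guiding observation is that $a \to b \notin F$ is exactly the statement that $b$ fails to lie in the filter generated by $F$ together with $a$.

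Concretely, I would first form the set $F_a^* = \{x \in A : a \to x \in F\}$. Since $\mathfrak{A}$ is an implication algebra, Proposition~\ref{lemmaimpalgebraposimp} tells us that $F_a^*$ is the least implicative filter containing both $F$ and $a$; in particular $F \subseteq F_a^*$ and $a \in F_a^*$. Next I would observe that $b \notin F_a^*$: membership of $b$ in $F_a^*$ would mean, by definition, that $a \to b \in F$, contradicting the hypothesis. In particular $F_a^* \neq A$, so $F_a^*$ is a proper implicative filter. Now I would apply Corollary~\ref{primeimplicativefilter} — which is available here because every implication algebra is an implicative algebra — to the implicative filter $F_a^*$ and the element $b \notin F_a^*$, obtaining a prime implicative filter $G$ with $F_a^* \subseteq G$ and $b \notin G$. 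Since $F \subseteq F_a^* \subseteq G$ and $a \in F_a^* \subseteq G$ while $b \notin G$, this $G$ witnesses the claim.

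The only step that needs any care is the appeal to Proposition~\ref{lemmaimpalgebraposimp}: one must check its hypotheses are met so that $F_a^*$ really is an implicative filter and really contains $F \cup \{a\}$ (equivalently, that implicative filters in an implication algebra are upward closed, which follows since $x \leq y$ gives $x \to y = 1 \in F$, and that $1 \in F$ together with $x \le a \to x$ places $F$ inside $F_a^*$). Beyond that, the argument is pure bookkeeping with the inclusions, so I do not anticipate a genuine obstacle — this is, in effect, a corollary of a corollary.
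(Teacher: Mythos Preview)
Your proposal is correct and follows essentially the same route as the paper: form $F_a^* = \{x \in A : a \to x \in F\}$, use Proposition~\ref{lemmaimpalgebraposimp} to see it is the least implicative filter containing $F$ and $a$, observe $b \notin F_a^*$ since otherwise $a \to b \in F$, and then invoke Corollary~\ref{primeimplicativefilter}. The extra care you take in checking the hypotheses of Proposition~\ref{lemmaimpalgebraposimp} is fine but not strictly necessary, as that proposition already applies to any implication algebra.
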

\begin{proof}

Let $F_a*$ be the implicative filter generated by the filter $F$ and $a$. Suppose that $a \to b \notin F$. If $b \in F_a*$, then we have $a \to b \in F$ by the definition of $F_a*$. This contradicts our assumption that $a \to b \notin F$; hence $b \notin F_a*$, and applying Corollary~\ref{primeimplicativefilter} for $F_a*$ and $b$ we have a prime filter $G$ such that  $F_a* \subseteq G$ and $b \notin G$. Clearly, $a \in G$ and $F \subseteq G$.
\qed\end{proof}

We have then, as an immediate corollary from Corollary~\ref{primeextensionfailingb} and Proposition~\ref{equivbetweenprimeandirr}, the following:

\begin{corollary}
Let $F$ be an implicative filter of an implication algebra $\mathfrak{A} = (A, \to)$ such that $a \to b \notin F$ for some $a, b \in A$. Then there exists an irreducible implicative filter $G : F \subseteq G$ such that $a \in G$ and $b \notin G$.
\end{corollary}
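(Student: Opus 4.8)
The plan is to chain the two results cited in the sentence preceding the statement, with no extra work. First I would apply Corollary~\ref{primeextensionfailingb} to the implicative filter $F$ and the elements $a,b$ with $a \to b \notin F$; this immediately yields a \emph{prime} implicative filter $G$ with $F \subseteq G$, $a \in G$, and $b \notin G$. Then I would invoke Proposition~\ref{equivbetweenprimeandirr}, which asserts that in an implication algebra an implicative filter is prime exactly when it is irreducible, to upgrade ``prime'' to ``irreducible'' for this same $G$. Since the containment $F \subseteq G$ and the conditions $a \in G$, $b \notin G$ are already supplied by the first step, the proof is complete at that point.

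There is essentially no obstacle. The one point worth flagging is that Proposition~\ref{equivbetweenprimeandirr} is stated for implication algebras (not merely positive implication algebras), so this derivation does not transfer verbatim to the weaker setting; but the hypothesis already fixes $\mathfrak{A}$ to be an implication algebra, which is precisely where the prime/irreducible equivalence holds, so the invocation is legitimate. An alternative, slightly more self-contained route would be to re-run the argument of Corollary~\ref{primeextensionfailingb} with Corollary~\ref{primeimplicativefilter} replaced by Lemma~\ref{irrimplicativefilter} (producing an irreducible extension of $F_a^*$ avoiding $b$ directly), but routing through the already-established equivalence is shorter and is the approach I would record.
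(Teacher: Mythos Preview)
Your proposal is correct and matches the paper's approach exactly: the paper states this corollary as ``immediate'' from Corollary~\ref{primeextensionfailingb} and Proposition~\ref{equivbetweenprimeandirr}, which is precisely the chaining you describe.
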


Finally, the culminating representation theorem. Rasiowa presents this for irreducible implicative filters \cite{Rasiowa1974-RASAAA-2}, which given her equivalence result, one can also state using prime implicative filters, or maximal implicative filters.

\begin{theorem}[Rasiowa 7.1]\label{Stone}
For any implication algebra $\mathfrak{A} = (A, \to)$, there is a monomorphism $h$ from $\mathfrak{A}$ to $(\wp(X), \to)$ of an arbitrary space $X$ with $|X| \geq A$.
\end{theorem}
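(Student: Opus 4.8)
The plan is to carry out the Stone-style construction whose filter theory has been assembled above. Take $X$ to be the set of all prime implicative filters of $\A$ --- equivalently, by Proposition~\ref{equivbetweenprimeandirr}, the irreducible ones --- and define $h\colon A \to \wp(X)$ by $h(a) = \{F \in X : a \in F\}$. The theorem then reduces to three points: (i) $h$ is a $\to$-homomorphism and sends $1$ to the top; (ii) $h$ is injective; (iii) the base $X$ may be taken of cardinality at least $|A|$.

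Point (i) rests on the following sublemma: for any prime implicative filter $F$ and any $a,b \in A$, $a \to b \in F$ if and only if $a \notin F$ or $b \in F$. The forward implication is immediate from clause (ii) of Definition~\ref{defoffilter} (if $a \to b \in F$ and $a \in F$, then $b \in F$). For the converse: if $b \in F$, then since $b \le a \to b$ by \ref{it:p1} and Proposition~\ref{posimpinequationtoimp}, and implicative filters are upward closed, $a \to b \in F$; and if $a \notin F$, then from $a + (a \to b) = 1 \in F$ and primeness of $F$ (Definition~\ref{prime}) we obtain $a \to b \in F$. The identity $a + (a \to b) = 1$ used here follows from Proposition~\ref{distr}: $a \to (a \to b) = (a \to a) \to (a \to b) = 1 \to (a \to b) = a \to b$, hence $a + (a \to b) = \bigl(a \to (a \to b)\bigr) \to (a \to b) = (a \to b) \to (a \to b) = 1$. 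Granting the sublemma, $h(a \to b) = \{F : a \notin F \text{ or } b \in F\} = (X \setminus h(a)) \cup h(b)$, which is exactly $h(a) \to h(b)$ in $(\wp(X), \to)$; and $h(1) = X$ because $1$ belongs to every implicative filter, consistent with $1 = a \to a$.

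For point (ii), suppose $a \neq b$. Since the order $\le$ of Proposition~\ref{posimpinequationtoimp} is antisymmetric, one of $a \to b,\ b \to a$ differs from $1$; say $a \to b \neq 1$, so $a \to b \notin \{1\}$, and $\{1\}$ is an implicative filter. Applying Corollary~\ref{primeextensionfailingb} to $\{1\}$ yields a prime implicative filter $G$ with $a \in G$ and $b \notin G$, so $G \in h(a) \setminus h(b)$ and $h(a) \neq h(b)$. Thus $h$ is a monomorphism into $(\wp(X), \to)$. For point (iii), if the prime-filter space $X$ is smaller than $|A|$, or if one simply wants a prescribed larger base $X' \supseteq X$, one checks directly that $S \mapsto S \cup (X' \setminus X)$ is a $\to$- and $1$-preserving embedding of $(\wp(X), \to)$ into $(\wp(X'), \to)$, and composes it with $h$; taking $X'$ to be the disjoint union of $X$ with a set of size $|A|$ secures the cardinality bound.

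The only genuinely non-routine point in the whole argument is the converse direction of the sublemma --- specifically the step $a \notin F \Rightarrow a \to b \in F$, which is exactly where primeness of $F$ and the Boolean-style identity $a + (a \to b) = 1$ are used. Everything else is bookkeeping assembled from the filter-extension machinery (Corollary~\ref{primeextensionfailingb}, Proposition~\ref{equivbetweenprimeandirr}) already established.
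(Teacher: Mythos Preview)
The paper does not actually supply its own proof of Theorem~\ref{Stone}; it attributes the result to Rasiowa and only assembles the filter-theoretic ingredients (Propositions~\ref{1.8}--\ref{equivbetweenprimeandirr}, Lemma~\ref{irrimplicativefilter}, and especially Corollary~\ref{primeextensionfailingb}) that such a proof requires. Your argument is correct and is exactly the standard Stone-style construction those ingredients are designed to support: take $X$ to be the prime (equivalently irreducible) filters, set $h(a)=\{F\in X: a\in F\}$, verify the $\to$-homomorphism via the primeness characterisation (your sublemma, whose non-trivial direction $a\notin F\Rightarrow a\to b\in F$ is handled cleanly by $a+(a\to b)=1$), obtain injectivity from Corollary~\ref{primeextensionfailingb}, and pad the base for the cardinality clause via $S\mapsto S\cup(X'\setminus X)$. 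Nothing is missing.
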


From this it follows that every implication algebra is isomorphic to an implication algebra of sets. Since the focus of the present paper is on representations, we note a corollary from this last result \cite{10.2307/43679502,diego1965algebras,Rasiowa1974-RASAAA-2}:

\begin{corollary}
For any implication algebra $\mathfrak{A}$, if $A$ is finite, then $\mathfrak{A}$ has a finite representation.
\end{corollary}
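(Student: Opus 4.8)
The plan is to read this off Theorem~\ref{Stone}, the only subtlety being to reconcile the target of Rasiowa's representation---the implication algebra $(\wp(X), \to)$ of all \emph{subsets of a set} $X$---with the notion of representability fixed in Section~\ref{sec:prelim}, namely an embedding into $\mathfrak{A}(\top)$ for some binary relation $\top \subseteq Y \times Y$. First I would apply Theorem~\ref{Stone} with the ambient space taken to be $A$ itself (any set of cardinality $\geq |A|$ works, but $A$ is finite by hypothesis, so $X := A$ is a finite base); this yields a monomorphism $h \colon \mathfrak{A} \to (\wp(X), \to)$, where $\to$ is the set operation $a \to b = (X \setminus a) \cup b$.

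Second, I would exhibit an isomorphism $(\wp(X), \to) \cong \mathfrak{A}(\mathrm{Id}_X)$, where $\mathrm{Id}_X = \{(x,x) \mid x \in X\}$ is the identity relation on $X$ regarded as a subset of $X \times X$. The map $\iota \colon \wp(X) \to \wp(\mathrm{Id}_X)$, $S \mapsto \{(x,x) \mid x \in S\}$, is a bijection, and a one-line check gives $\iota(a \to b) = (\mathrm{Id}_X \setminus \iota(a)) \cup \iota(b)$, so $\iota$ commutes with $\to$ and is thus an isomorphism of implication algebras. Composing, $\iota \circ h \colon \mathfrak{A} \to \mathfrak{A}(\mathrm{Id}_X)$ is a monomorphism, and since $\mathrm{Id}_X \subseteq X \times X$ with $X$ finite, this exhibits $\mathfrak{A}$ as finitely representable in the sense of Section~\ref{sec:prelim}.

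There is essentially no obstacle here: the content is entirely carried by Theorem~\ref{Stone}, and the only thing worth spelling out is that a set-based representation can be promoted to a relation-based one simply by coding each point $x$ as the diagonal pair $(x,x)$. The diagonal is the most economical such coding; any fixed injection from $X$ into some $Y \times Y$ would serve equally well, so the argument is robust to the exact conventions one adopts for $\mathfrak{A}(\top)$.
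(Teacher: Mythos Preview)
Your proposal is correct and follows the same line as the paper: apply Theorem~\ref{Stone} with a finite base $X$ (you take $X=A$; the paper takes any $X$ with $|X|=|A|$) and conclude. The only difference is that you explicitly bridge the gap between the set-of-subsets target $(\wp(X),\to)$ of Theorem~\ref{Stone} and the relation-based target $\mathfrak{A}(\top)$ required by the definition of representability in Section~\ref{sec:prelim}, via the diagonal coding $S\mapsto\{(x,x)\mid x\in S\}$; the paper's own proof silently identifies the two, so your version is in fact more careful on this point.
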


\begin{proof} Let $\mathfrak{A}$ be a finite implication algebra. Then by Theorem \ref{Stone} $\mathfrak{A}$ is monomorphic to the algebra $\mathfrak{A}\prime$ under $h$, where $\mathfrak{A}\prime = (\wp(X), \to)$ , an implication algebra of sets. Now if $|X| = A$ then $|\wp(X)| = 2^{|A|}$, and thus finite. That means 
 $h(\mathfrak{A})$, the subalgebra of $\mathfrak{A}\prime$ under $h$, is finite. So we have $h(\mathfrak{A})$ is a finite implication algebra (induced by $h$ and $\mathfrak{A}$), and hence $h$ is a finite representation of $\mathfrak{A}$.
\qed\end{proof}

Now, the focus of the rest of the paper revolves around the (finite) representation decision problem for implication semigroups. In the case of $\IA$, this is defined as follows:

\begin{definition}
\emph{The (finite) representation decision problem for implication algebras} is a decision problem that takes an implication algebra with a (finite) carrier set as input. The algebra is a yes instance if and only if it is (finitely) representable.
\end{definition}

Closing this section, we note:
\begin{corollary}
$\IA$ is finitely axiomatisable.
\end{corollary}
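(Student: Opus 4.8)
The plan is to observe that this corollary is essentially immediate from the definition of the variety $\IA$ together with the representation theorem already in hand, so the "proof" is a short bookkeeping argument rather than anything substantive. First I would recall that $\IA$ was defined in the very first definition of the paper by the three equations (Contraction), (Quasi-commutativity), and (Exchange), so as a class of algebras in the signature $(\to)$ it is axiomatised by finitely many equations outright. The only thing to check is that "finitely axiomatisable" in the intended sense — namely, that the class of \emph{representable} implication algebras is axiomatised by finitely many equations — coincides with this, and that is exactly what Theorem~\ref{Stone} delivers: every member of $\IA$ embeds into some $\A(\top)$, hence is representable, and conversely every representable algebra is a subalgebra of some $\A(\top) \in \IA$ and so lies in the variety $\IA$ since varieties are closed under subalgebras. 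Thus the representable implication algebras form precisely the variety $\IA$, which is finitely axiomatised by definition.

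So the key steps, in order, would be: (1) state that $\IA$ is defined by the three equations (i), (ii), (iii), hence finitely axiomatisable as an equational class; (2) invoke Theorem~\ref{Stone} to conclude $\IA \subseteq \{\text{representable implication algebras}\}$; (3) note the reverse inclusion, using that each $\A(\top)$ satisfies the three axioms (remarked just after its definition) and that subalgebras of algebras in a variety remain in the variety; (4) conclude that the class of representable implication algebras equals $\IA$ and is therefore finitely axiomatisable by the same three equations. One could also phrase the conclusion as: the representation and finite representation problems for $\IA$ are trivially decidable, since on input an algebra with finite carrier one simply checks the three identities — but that is the content of the preceding corollaries rather than of this one.

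The main "obstacle", such as it is, is purely expository: making clear which class is being claimed finitely axiomatisable. Taken at face value, $\IA$ is finitely axiomatisable by fiat; the mathematically meaningful reading is that the representable ones are, and the work justifying that reading is Rasiowa's representation theorem (Theorem~\ref{Stone}) together with the trivial observation that representability is preserved downward under subalgebras. Since both ingredients are already established in the excerpt, the proof is a two-line appeal to them, and I would keep it that brief, resisting the temptation to re-prove anything.

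\begin{proof}
By definition, $\IA$ is the class of algebras $(A,\to)$ satisfying the three equations (Contraction), (Quasi-commutativity), and (Exchange), so $\IA$ is an equational class axiomatised by three identities. It remains only to observe that this coincides with the class of representable implication algebras. By Theorem~\ref{Stone}, every member of $\IA$ embeds into $(\wp(X),\to)$ for a suitable $X$, hence is representable. Conversely, each $\A(\top)$ lies in $\IA$ (as noted after its definition), and $\IA$, being a variety, is closed under subalgebras; so every representable implication algebra lies in $\IA$. Thus the representable implication algebras are exactly the models of the three axioms above, and $\IA$ is finitely axiomatisable.
\qed\end{proof}
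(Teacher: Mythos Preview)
Your proposal is correct and matches the paper's intent: the paper states this corollary without proof, treating it as immediate from Theorem~\ref{Stone}, and your argument spells out precisely why---the three defining equations of $\IA$ axiomatise exactly the representable implication algebras once the Stone-style representation is in hand. Your care in distinguishing the trivial reading (``$\IA$ is defined by three equations'') from the substantive one (``the representable implication algebras coincide with $\IA$'') is apt and mirrors the paper's abstract, which frames the result as ``representable implication algebras are known to be axiomatised by a finite number of equations.''
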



\begin{corollary}
The (finite) representation problem for $\IA$ is decidable.
\end{corollary}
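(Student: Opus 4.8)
The statement to prove is that the (finite) representation problem for $\IA$ is decidable.

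The plan is to derive this as an immediate consequence of the finite axiomatisability of $\IA$ (the preceding corollary) together with the fact that $\IA$ coincides with the class of representable implication algebras. First I would recall that, by definition, every algebra in $\IA$ embeds into some $\A(\top)$ (this is precisely Theorem~\ref{Stone}, the Stone-style representation), so representability imposes no extra constraint: $\A \in \IA$ if and only if $\A$ is representable. Hence the representation problem reduces to deciding membership in $\IA$, which is decidable because $\IA$ is defined by the finitely many equations (i)--(iii) of the definition: given a finite algebra as input, one simply checks each of the three identities on all tuples of elements, which terminates.

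For the finite representation problem I would argue separately but just as directly. Given a finite $\A$, by the corollary following Theorem~\ref{Stone} every finite implication algebra has a finite representation; so a finite $\A$ is finitely representable precisely when it is representable, precisely when $\A \in \IA$. Thus the same finite check of the three axioms decides the finite representation problem. In both cases the decision procedure is: verify (i), (ii), (iii) on the input algebra and answer yes iff all hold.

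The main (and only mild) obstacle is conceptual rather than technical: one must be careful that the input to the decision problem is an arbitrary finite algebra of signature $(A, \to)$, not assumed a priori to lie in $\IA$, so the algorithm must first test the axioms. Once that is observed, there is no real difficulty --- there is no search over candidate bases $X$ to perform, precisely because representability is already captured equationally. I would close by remarking that this is in deliberate contrast with the situation for implication semigroups developed in the next section, where no such equational characterisation of representability is available and the problem turns out to be undecidable.
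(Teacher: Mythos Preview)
Your proposal is correct and matches the paper's intended reasoning: the paper states this corollary without proof, as an immediate consequence of the Stone-style representation (Theorem~\ref{Stone}), its finite-base refinement, and the finite axiomatisability of $\IA$. One small remark: under the paper's own Definition the input is already assumed to be an implication algebra, so strictly speaking the answer is trivially ``yes'' in every instance; your more cautious reading (input is an arbitrary finite $\langle\to\rangle$-algebra) is the more robust one and your axiom-checking procedure handles it correctly.
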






\section{Undecidability Results for Implication Semigroups}
\label{sec:undec}

In this section we build on results from \cite{hirsch2012undecidability,neuzerling2016undecidability,hirsch2021undecidability} to show undecidability of some decision problems for . We begin by defining the representation and the finite representation decision problems.

\begin{definition}
The \emph{(finite) representation decision problem for implication semigroups} is a decision problem that takes an implication semigroup with a finite carrier set as input. The semigroup is a yes instance if and only if it is (finitely) representable.
\end{definition}

As we mention in Section~\ref{sec:prelim}, whether a structure is representable, also depends on our interpretation of the constant $1$. Here we show that the (finite) decision problem is undecidable in both cases.



\subsection{Representation Problem with Absolute Implication}

We begin by examining the case with absolute implication, i.e. we require $\top = X \times X$ for some (finite) base $X$.

\begin{definition}
An \emph{implication monoid} $\M = (M, 1', \to, ;)$ is an algebra where $(M,\to,;)$ is an implication semigroup and $1'$ is the monoidal identity for $;$. For some transitive and reflexive $\top \subseteq X \times X$, we define $\M(\top) = (\wp(\top), 1', \to, ;)$ where $\to,;$ are proper relational implication and composition respectively and $1'$ is the proper relational identity for $X$ defined as $1' = \{(x,x) \mid x \in X\}$.
\end{definition}


In \cite[Section 4]{hirsch2012undecidability} a construction of a Boolean monoid from a square cancellative partial group $\G$ is given. Its implication monoid reduct is denoted $\M(\G) = (M,1',\to,;)$. By \cite[Proposition 5.1, Example 6.2]{hirsch2012undecidability} $\M(\G)$ is representable (over a finite base) if and only if $\G$ embeds into a (finite) group.

From the fact that both the group and the finite group embedding problems are undecidable \cite{evans1953embeddability} for finite structures it follows that the (finite) representation decision problem is undecidable. Thus if we prove that the $\IS$ reduct of $\M(\G)$ is (finitely) representable if and only if $\M(\G)$ is representable, we have shown that the (finite) representability is undecidable. The right to left implication is trivial. But we must examine the case where we relax the requirement where we represent $1'$ as the true relational identity, and show that this is still sufficient for the structure to remain (finitely) representable with $1'$ taken as the true relational identity.

Suppose we have an embedding $h$ from $\M(\G)$ to $\s(\top)$, i.e. an injective mapping that preserves $\to, ;$, but not necessarily $1'$.

\begin{lemma}
\label{lem:idrev}
If $(x,y) \in h(1')$ then $(y,x) \in h(1')$.
\end{lemma}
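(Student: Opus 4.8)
The plan is to exploit the quasi-additivity axioms of implication semigroups together with the fact that $h$ is an embedding into $\s(\top)$, where $\to$ is genuine Boolean implication and $;$ is genuine relational composition. The key observation is that $1'$, while not necessarily mapped to the true relational identity, is still the monoidal identity for $;$ in $\M(\G)$, so $h(1')$ is a relation with $h(1'); h(a) = h(a) = h(a); h(1')$ for every $a \in M$, and in particular $h(1'); h(\top_M) = h(\top_M)$ where $\top_M$ denotes the top element of $\M(\G)$ (which $h$ need not send to $\top$, but which is the largest element of the image). Since $h(1') \subseteq h(\top_M) \subseteq \top$ and $h$ preserves $\to$, hence preserves the Boolean structure on the interval below $h(\top_M)$ (by Proposition~\ref{prop:jointop}, joins are preserved, and complements relative to $h(\top_M)$ are too), the image $h(M)$ is a Boolean algebra of subsets of $h(\top_M)$, and $h(1')$ is a concrete subset of $h(\top_M)$.

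First I would pin down what $h(1')$ looks like as a relation: write $e = h(1')$ and $T = h(\top_M)$, so $e \subseteq T$ and $e$ is an identity for $;$ restricted to subrelations of $T$ in the image. From $e; e = e$ we get that $e$ is transitive and idempotent. The crucial step is to use that $e$ acts as a two-sided identity on \emph{every} element of the image, combined with the Boolean structure: for a point-pair singleton-like behaviour we cannot assume the image is atomic, but we can argue at the level of $\top$ directly. Given $(x,y) \in e$, consider the relation $e$ itself: from $e ; e = e$ and $(x,y) \in e$, and the fact that $e$ is an identity, I want to derive $(y,x) \in e$. The standard trick: since $e$ is an identity for composition with every image element, in particular $a ; e = a$ for all $a \in h(M)$; taking $a = e$ gives transitivity, but to get symmetry I would look at how $e$ interacts with the complement. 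Here is the idea: suppose $(x,y) \in e$ but $(y,x) \notin e$. Because $e$ is an identity, for any image relation $a$ with $(y, z) \in a$ we need $(x,z) \in a$ (from $e ; a = a$), and for any $a$ with $(z,x) \in a$ we need $(z,y) \in a$ (from $a ; e = a$). Now apply this to $a = e$: $(y,y) \in e$ (since $e;e = e$ forces, via the identity on the one-element sub-relation... ) — more carefully, $(x,y)\in e = e;e$ gives some $w$ with $(x,w),(w,y) \in e$; and using the identity property $e;e=e$ repeatedly one shows $(x,x),(y,y) \in e$. Then from $(y,x) \notin e$ but $(y,y) \in e$ and the identity acting on the left, applied to the relation $e$ shifted appropriately, one reaches a contradiction.

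The cleanest route, which I would write up, is: from $(x,y) \in e$ and $e;e = e$, deduce $(x,x) \in e$ (left-identity applied to the pair $(x,y)$: since $e ; e = e$ and... actually use $e ; a = a$ with a suitable $a$). Then $(y,x) \in e$ follows because $e$ being a left identity on the image means $e ; a = a$; take $a$ to be the smallest image relation containing $(y,x)$ — but such a smallest need not exist. So instead I would use \emph{right} identity on $(x,x)$: we have $(x,x) \in e$; we want $(y,x) \in e$. Consider that $(x,y) \in e$ implies, via $e$ being a \emph{right} identity ($a ; e = a$) applied to any image $a$ with $(z,x) \in a$, that $(z,y) \in a$; dually $e$ a \emph{left} identity gives: $(x,y) \in e$ and $(y,z) \in a$ implies $(x,z) \in a$. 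Applying the left-identity consequence with $a = e$ and $z = x$: we'd need $(y,x) \in e$ to conclude $(x,x) \in e$ — the wrong direction. The genuine argument must use that $h$ preserves $\to$, i.e. Boolean complement relative to $T$: $h(\top_M \to a) = (T \setminus h(a))$ intersected appropriately. So from $e$ being an identity, $e ; (T\setminus e) $ and $(T \setminus e); e$ are computable, and since $T \setminus e$ is again an image element, $e ; (T \setminus e) = T \setminus e$ and $(T\setminus e); e = T\setminus e$. Now $(x,y) \in e$ and $(y,x) \notin e$ means $(y,x) \in T \setminus e$ (provided $(y,x) \in T$, which holds since $(y,y),(x,x) \in e \subseteq T$ and $T$ is transitive... we need $(y,x) \in T$: from $(x,y) \in T$ we don't directly get $(y,x) \in T$, so this is a gap to handle). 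Assuming $(y,x) \in T$: then $(x,y) \in e$ and $(y,x) \in T \setminus e$ give, by $e ; (T \setminus e) = T \setminus e$, nothing immediately, but $(x,y)\in e$, $(y,x) \in T\setminus e$ gives via $e ; (T\setminus e)$ the pair $(x,x) \in T \setminus e$, contradicting $(x,x) \in e$.

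\textbf{Main obstacle.} The hard part will be handling the ``$(y,x) \in T$'' issue and, more fundamentally, establishing that $(x,x), (y,y) \in h(1')$ whenever $(x,y) \in h(1')$ — i.e. that $h(1')$ behaves like a partial equivalence relation — purely from the identity and quasi-additivity axioms and the Boolean structure of the image, \emph{without} assuming atomicity of the representation. I expect one must leverage $h(1'); a = a$ not just for $a$ in the image but derive structural facts, e.g. that $h(\top_M)$ is itself an equivalence-like relation ($h(\top_M); h(\top_M) = h(\top_M)$ by transitivity of $\top$ and $1'$-multiplication, and symmetry coming from $\M(\G)$'s own top being symmetric if it is), so that the whole analysis takes place inside a genuine ``square'' $T = E \times E$ or a disjoint union of such, reducing Lemma~\ref{lem:idrev} to the obvious fact that inside a square the identity's image, being an idempotent two-sided identity, must be the diagonal. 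Getting $T$ to be (a union of) squares is where the quasi-additivity axioms and the specific structure of $\M(\G)$ from \cite{hirsch2012undecidability} will be needed.
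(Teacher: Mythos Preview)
Your proposal circles the right idea but has two concrete gaps that the paper's proof avoids cleanly.

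First, you are working too hard on the ``$(y,x)\in T$'' issue. This lemma sits in the \emph{absolute implication} subsection, so $\top = X\times X$; moreover $h$ preserves $\to$, hence $h(\top_M)=h(a\to a)=(\top\setminus h(a))\cup h(a)=\top$. Thus your $T$ equals $X\times X$ and every pair, in particular $(y,x)$, lies in it automatically. All of your closing speculation about needing quasi-additivity or the specific structure of $\M(\G)$ to make $T$ a union of squares is unnecessary.

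Second, and more seriously, you treat $T\setminus e$ as though it were an image element and write $e;(T\setminus e)=T\setminus e$. It need not be: what is in the image is $h(\overline{1'})=h(1'\to 0)=(\top\setminus h(1'))\cup h(0)$, which may properly contain $T\setminus e$. The paper's proof exploits exactly this. From $(x,y)\in h(1')$ and $(y,x)\in h(\overline{1'})$ it composes (using $1';\overline{1'}=\overline{1'}=\overline{1'};1'$) to get $(x,x)\in h(\overline{1'})$ and then $(x,y)\in h(\overline{1'})$. Now $(x,y)\in h(1')\cap h(\overline{1'})$ forces $(x,y)\in h(0)$, since $h(\overline{1'})=(\top\setminus h(1'))\cup h(0)$. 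A further pair of compositions with $(y,x)\in h(\overline{1'})$ and the law $a;0=0;a=0$ gives $(y,x)\in h(0)\subseteq h(1')$, the desired contradiction. Note that the argument never uses $(x,x)\in h(1')$; in fact the paper derives reflexivity \emph{from} symmetry in the next lemma, so your plan to establish $(x,x)\in e$ first is backwards and would be circular if carried out as the paper does.

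In short: recognise $\top=X\times X$, work with the genuine image element $\overline{1'}=1'\to 0$ rather than the set-theoretic complement, and let the bottom element $0$ (present because $\M(\G)$ is the reduct of a Boolean monoid) carry the contradiction.
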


\begin{proof}
Suppose $(y,x) \not \in h(1')$. That means that $(y,x) \in h(1' \to 0) = h(\overline{1'})$. By composition of $(x,y) \in h(1')$ and $(y,x) \in h(\overline{1'})$ we get that $(x,x) \in h(\overline{1'})$ and by composing that with $(x,y) \in h(1')$ we have that $(x,y) \in h(\overline{1'})$. As $(x,y) \in h(1')$ and $(x,y) \in h(\overline{1'}) = h(1' \to 0)$, we also have $(x,y) \in h(0)$. By a series of compositions we also get that $(y,x) \in h(0)$ and because $0 \leq 1'$ we also get $(y,x) \in h(1')$ and we've reached a contradiction.
\qed\end{proof}

\begin{lemma}
\label{lem:eqRel}
$h(1')$ is an equivalence relation.
\end{lemma}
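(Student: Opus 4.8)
The plan is to show that $h(1')$ is reflexive, symmetric, and transitive as a binary relation on (the relevant subset of) the base $X$, working entirely from the facts that $h$ preserves $\to$ and $;$ and that $\M(\G)$ is an implication monoid with bottom element $0$. Symmetry is already done: it is exactly Lemma~\ref{lem:idrev}. Transitivity is immediate, since $1' ; 1' = 1'$ in any monoid and $h$ preserves $;$, so $h(1') ; h(1') = h(1')$, which is precisely the statement that $h(1')$ is a transitive relation. The only real content left is reflexivity, suitably interpreted.

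For reflexivity, the key observation is that $h(1')$ need not be reflexive on all of $X$, but it must be reflexive on its own field: if $(x,y) \in h(1')$ for some $y$, then $(x,x) \in h(1')$. First I would use symmetry (Lemma~\ref{lem:idrev}) to get $(y,x) \in h(1')$, and then compose: from $(x,y) \in h(1')$ and $(y,x) \in h(1')$, preservation of $;$ gives $(x,x) \in h(1'); h(1') = h(1')$. The same argument gives $(y,y) \in h(1')$. Hence $h(1')$ restricted to the set $Y = \{x \in X \mid \exists z\,(x,z)\in h(1')\} = \{x \in X \mid \exists z\,(z,x)\in h(1')\}$ (the two descriptions agree by symmetry) is reflexive on $Y$, symmetric, and transitive, i.e. an equivalence relation on $Y$. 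I would state the lemma in this form, or alternatively observe that one may harmlessly restrict the base $X$ to $Y$ without affecting the representation of the rest of the algebra, so that $h(1')$ becomes a genuine equivalence relation on the ambient base.

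The main obstacle — really the only subtlety — is pinning down the correct reading of ``$h(1')$ is an equivalence relation.'' Since $h$ preserves neither $1'$ nor any top element, $h(1')$ lives inside $\top = h(\top_{\M})$, and there is no a priori reason for its field to be all of $X$; elements of $X \setminus Y$ are simply irrelevant. So the careful step is to show the field of $h(1')$ is closed in the right way (which is what the composition argument above delivers) and to note that on that field the three equivalence-relation properties hold. Everything else is a one-line consequence of $h$ being a homomorphism for $;$ together with Lemma~\ref{lem:idrev}; no further properties of the Hirsch construction $\M(\G)$ are needed here beyond those already invoked in Lemma~\ref{lem:idrev}.
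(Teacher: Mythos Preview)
Your symmetry and transitivity arguments match the paper exactly. The difference is in reflexivity: you prove only that $h(1')$ is reflexive on its own field, and you explicitly assert that ``there is no a priori reason for its field to be all of $X$.'' But there is such a reason, and the paper uses it. This lemma sits in the \emph{absolute} implication subsection, so $\top = X \times X$; and since $h$ preserves $\to$, Proposition~\ref{prop:jointop} gives $h(1) = \top = X \times X$ (the paper writes $h(\top)$ for this). In particular every diagonal pair $(x,x)$ lies in $h(1)$. Now $1';1 = 1$ in $\M(\G)$, so $h(1');h(1) = h(1)$; hence for each $x \in X$ there is some $z$ with $(x,z) \in h(1')$. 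Lemma~\ref{lem:idrev} then gives $(z,x) \in h(1')$, and composing yields $(x,x) \in h(1')$. Thus $h(1')$ is reflexive on all of $X$, and the quotient $X/h(1')$ used in the subsequent theorem is well-defined without any restriction of the base.

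Your weaker statement together with the proposed base-restriction workaround would still push the overall argument through, but it is an unnecessary detour: the absolute-implication hypothesis is precisely what delivers full reflexivity, and you left that hypothesis on the table.
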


\begin{proof}
By Lemma~\ref{lem:idrev} we have that $h(1')$ is symmetric. Furthermore, since all $(x,x) \in h(\top)$ there must exist a $z$ witnessing $1';\top = \top$. Thus $(x,z) \in h(1')$ and $(z,x) \in h(1')$ and we compose that to get $(x,x) \in h(1')$, so $h(1')$ is reflexive. Finally, as $1' = 1';1'$ we also have that $h(1')$ is transitive.
\qed\end{proof}

\begin{lemma}
\label{lem:eqClassEqLabel}
For all $x,x',y,y' \in X$ where $(x',x),(y,y') \in h(1')$ we have for all $a \in \mathfrak{M}(\mathfrak{G})$ that $(x,y) \in h(a) \Leftrightarrow (x',y') \in h(a)$.
\end{lemma}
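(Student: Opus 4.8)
The plan is to exploit that $h$ preserves composition and that, by Lemma~\ref{lem:eqRel}, $h(1')$ is an equivalence relation, so the pairs $(x',x)$ and $(y,y')$ lying in $h(1')$ behave like ``identity moves'' that can be composed on either side of a relation in the image of $h$. Concretely, suppose $(x,y)\in h(a)$. Since $(x',x)\in h(1')$ and $(y,y')\in h(1')$, composing gives $(x',y)\in h(1');h(a)=h(1';a)$ and then $(x',y')\in h(1');h(a);h(1')=h(1';a;1')$. So the whole argument reduces to showing $1';a;1'=a$ in $\mathfrak{M}(\mathfrak{G})$ — but that is immediate because $1'$ is the monoidal identity for $;$ in the implication monoid $\mathfrak{M}(\mathfrak{G})$. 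Hence $(x',y')\in h(a)$. The reverse implication is symmetric: from $(x',y')\in h(a)$ we use $(x,x')\in h(1')$ and $(y',y)\in h(1')$ — available because $h(1')$ is symmetric by Lemma~\ref{lem:idrev} — to compose back and recover $(x,y)\in h(1';a;1')=h(a)$.

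So the key steps, in order, are: (1) invoke Lemma~\ref{lem:eqRel} (and Lemma~\ref{lem:idrev}) to know $h(1')$ is a symmetric, transitive relation containing the relevant pairs and their reverses; (2) use preservation of $;$ under $h$ to pre- and post-compose the witnessing $h(1')$-pairs onto a pair in $h(a)$; (3) observe $1';a;1'=a$ since $1'$ is the identity of the monoid $\mathfrak{M}(\mathfrak{G})$, so $h(1';a;1')=h(a)$; (4) run the same computation in the other direction using symmetry of $h(1')$.

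I do not expect a genuine obstacle here — the lemma is essentially bookkeeping — but the one point to be careful about is that $h$ is only assumed to preserve $\to$ and $;$, not $1'$, so I must not slip into treating $h(1')$ as the true relational identity; all I am allowed to use is that $h(1')$ is an equivalence relation (Lemma~\ref{lem:eqRel}) and that $h$ is a homomorphism for $;$. The equation $1';a;1'=a$ is a fact about the source algebra $\mathfrak{M}(\mathfrak{G})$, where $1'$ genuinely is the monoidal unit, and that is what licenses replacing $h(1';a;1')$ by $h(a)$.
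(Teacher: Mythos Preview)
Your proposal is correct and follows essentially the same approach as the paper: use $(x',x),(y,y')\in h(1')$ together with preservation of $;$ and the source-algebra identity $1';a;1'=a$ to pass from $(x,y)\in h(a)$ to $(x',y')\in h(a)$, then invoke Lemma~\ref{lem:idrev} for the symmetric pairs to run the argument in reverse. Your explicit caution that $h$ need not send $1'$ to the true identity, and that $1';a;1'=a$ is used only in $\mathfrak{M}(\mathfrak{G})$, is exactly the right point.
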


\begin{proof}
If $(x,y) \in h(a)$ we have $(x,y') \in h(a)$ by $(x',x),(y,y') \in h(1')$ and the composition of $1';a;1'=a$. By Lemma~\ref{lem:idrev}, we also have $(x,x'),(y',y) \in h(1')$ so similarly if $(x,y') \in h(a)$ then $(x,y) \in h(a)$.
\qed\end{proof}

\begin{theorem}
The (finite) representation decision problem for $\IS$ is undecidable when $\to$ is interpreted as absolute implication.
\end{theorem}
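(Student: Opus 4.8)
The plan is to reduce the (finite) group embedding problem to the (finite) $\IS$-representation problem by showing that the $\IS$-reduct of $\M(\G)$ is (finitely) representable if and only if $\M(\G)$ itself is (finitely) representable as an implication monoid in the sense that $1'$ is sent to the true identity relation. The right-to-left direction is immediate, since a representation of $\M(\G)$ restricts to a representation of its $\IS$-reduct. So the work is entirely in the left-to-right direction: given an embedding $h$ of the $\IS$-reduct of $\M(\G)$ into some $\s(\top)$ with $\top = X \times X$, I must manufacture a genuine representation of $\M(\G)$ in which $1'$ goes to the diagonal.

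The key step is to quotient the base. By Lemma~\ref{lem:eqRel}, $h(1')$ is an equivalence relation on $X$; let $X' = X / h(1')$ be the set of equivalence classes, and define $h'(a) = \{([x],[y]) \mid (x,y) \in h(a)\}$ for each $a \in M$. Lemma~\ref{lem:eqClassEqLabel} is precisely what is needed to see that $h'(a)$ is well-defined on classes (membership does not depend on the chosen representatives) and, more importantly, that $h'$ is still injective: if $h'(a) = h'(b)$ then for any $(x,y) \in h(a)$ we get $([x],[y]) \in h'(b)$, hence some $(x'',y'') \in h(b)$ with $(x,x''),(y'',y)$ witnessing class-equality, i.e.\ in $h(1')$ (using symmetry from Lemma~\ref{lem:idrev}), so $(x,y) \in h(b)$ by Lemma~\ref{lem:eqClassEqLabel}; by symmetry $h(a) = h(b)$, and injectivity of $h$ gives $a = b$. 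Then I check that $h'$ preserves $\to$ and $;$ — this is a routine transfer along the quotient map, using that $h$ preserves them and that $h(1')$ absorbs on both sides of composition — and that $h'(1')$ is exactly the diagonal on $X'$: $([x],[y]) \in h'(1')$ iff some representatives are $h(1')$-related iff $[x] = [y]$. Finally, since $\to$ is absolute, $h'$ maps $\top = X \times X$ to $X' \times X'$, so $h'$ is a representation of $\M(\G)$ as an implication monoid over the base $X'$; if $X$ was finite then $X'$ is finite.

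With that equivalence established, undecidability follows by transport: by \cite[Proposition 5.1, Example 6.2]{hirsch2012undecidability}, $\M(\G)$ is (finitely) representable iff $\G$ embeds into a (finite) group, and by \cite{evans1953embeddability} the (finite) group embeddability problem for finite partial groups is undecidable; composing with the effective construction $\G \mapsto \M(\G) \mapsto (\text{its } \IS\text{-reduct})$ yields undecidability of the (finite) representation problem for $\IS$ with absolute implication. I would also remark that the finite and infinite cases run in parallel, since the quotient construction preserves finiteness of the base.

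The main obstacle is the injectivity and preservation check for $h'$ after quotienting: one has to be careful that collapsing $h(1')$-classes does not identify the images of two distinct elements of $M$, and this is exactly where Lemmas~\ref{lem:idrev}, \ref{lem:eqRel}, and \ref{lem:eqClassEqLabel} are deployed in concert. A secondary subtlety is confirming that the weakened hypothesis — $h$ need not preserve $1'$ — is genuinely recoverable; the lemmas leading up to this theorem are precisely the scaffolding that makes the recovery go through, so the proof is mostly a matter of assembling them correctly rather than introducing new ideas.
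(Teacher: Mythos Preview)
Your proposal is correct and follows essentially the same approach as the paper: quotient the base $X$ by the equivalence relation $h(1')$ (using Lemmas~\ref{lem:idrev}--\ref{lem:eqClassEqLabel}), verify that the induced map $h'$ is an injective homomorphism sending $1'$ to the diagonal, and then invoke the reduction to the (finite) group embedding problem via \cite{hirsch2012undecidability,evans1953embeddability}. Your injectivity argument is the contrapositive of the paper's, and you are slightly more explicit than the paper in flagging that preservation of $\to$ under the quotient also needs to be checked.
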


\begin{proof}
As $h(1')$ is an equivalence relation by Lemma~\ref{lem:eqRel}, so we can define $h': \M(\G) \to X/h(1')$ where
$$h'(a) = \{([x]_{h(1')}, [y]_{h(1')}) \mid (x,y) \in h(a)\}$$
and show that $h'$ is indeed an embedding of $\M(\G)$ into $\M(X \times X)$.

By Lemma~\ref{lem:eqClassEqLabel}, we know that if $(x,y) \in h(a)$ then for any $x' \in [x]_{h(1')}, y' \in [y]_{h(1')}$ we have $(x',y') \in h(a)$. 

Take any $a \leq b$. Then there exists $(x,y) \in h(a)\setminus h(b)$. From this follows $([x]_{h(1')}, [y]_{h(1')}) \in h'(a)$ and if it were the case that $([x]_{h(1')}, [y]_{h(1')}) \in h'(b)$ that would mean that there exist some $(x',y') \in h(b)$ with $(x,x') \in h(1')$ and $(y',y)\in h(1')$ and that would also means that $(x,y) \in h(b)$. Thus $h'$ is injective.

Every composition is witnessed by the equivalence class of the witness for the composition in $h$ and if $(x,y) \in h(a)$ and $(y',z) \in h(b)$ with $y' \in [y]_{h(1')}$ we also have $(y,y') \in h(1')$ and thus we have the composition $(x,z) \in h(a;1';b) = h(a;b)$. Thus $h'$ represents $;$ correctly. Finally $1'$ is represented correctly as a pair of equivalence classes is in $h'(1')$ if and only if they are the same equivalence class.

Thus we have shown that if we have an embedding of $\M(\G)$ into $\s(X \times X)$ then we also have an embedding of $\M(G)$ into $\M(X' \times X')$ where $X' = X/h(1')$. Furthermore if $X$ is finite, so is $X'$. Trivially if $\M(G)$ embeds into $\M(X\times X)$ it also embeds into $\s(X\times X)$ via the same embedding. This, together with the results presented in \cite{hirsch2012undecidability} shows that the (finite) representation decision problem for $\IS$ is undecidable.
\qed\end{proof}

\subsection{Representation Problem with Relative Implication}

Now we show the same result for relative implication.

\begin{definition}
A \emph{Boolean semigroup} is a tuple $\B = (B,0,1,-,+,;)$ is an algebraic structure where $S$ is a carrier set
\begin{enumerate}[label=(\roman*)]
    \item $(B,0,1,-,+)$ is a Boolean algebra
    \item $(B,;)$ is a semigroup
    \item $;$ is additive over $+$
    \item $0;a = a;0 = 0$
\end{enumerate}
\end{definition}

Similarly to , we denote the class of Boolean semigroups $\BS$ and we say that a Boolean semigroup is representable if and only for some transitive $\top \subseteq X \times X$ it embeds into $\B(\top) = (\wp(X\times X), \emptyset, \top, -, +, ;)$ where $-a$ is interpreted as proper Boolean negation $\top \setminus a$, $+$ is interpreted as proper Boolean join $\cup$ and $;$ is interpreted as proper relational composition.
 
The (finite) representation problem for Boolean semigroups is defined analogous to that for implication semigroups. \cite[Theorem 11.2]{hirsch2021undecidability} shows that the representation problem for Boolean semigroups is undecidable and \cite[Theorem 2.5]{neuzerling2016undecidability} shows that the finite representation problem for Boolean semigroups is undecidable. From this we show that the (finite) representation problem for implication semigroups is also undecidable.

Note that the above results require an operation $\cdot$ to be defined in the signature, but much like $\top$ in $\IA$, $\cdot$ is term definable for $\BS$ as $a \cdot b = -(-a + -b)$.

\begin{lemma}
\label{lem:zeroRep}
Let $\s = (S, \to, ;)$ be an implication semigroup that contains some element $0$ such that for all $a \in S$ we have $0 \leq a$ and $0;a=a;0=0$. If $\s$ is representable via some representation $h$ then there exists a representation $h'$ of $\s$ where $h'(0) = \emptyset$. If $h$ is defined over a finite base, so is $h'$.
\end{lemma}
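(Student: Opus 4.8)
The plan is to quotient the base of the given representation $h$ by the relation of ``being forced to behave identically,'' exactly as in the absolute case, but using the element $0$ rather than $1'$ to drive the construction. First I would observe that $h(0)$, although possibly nonempty, is a very restricted relation: since $0;a = a;0 = 0$ for all $a$ and $0 \leq a$ for all $a$ (so $h(0) \subseteq h(a)$, in particular $h(0) \subseteq h(\top) = \top$), any pair in $h(0)$ composes on either side with anything in $h(\top)$ to land back in $h(0)$. The key structural claim to establish is that the relation $E$ on $X$ defined by $x \mathrel{E} y \iff (x,y) \in h(0)$ or $x = y$ is an equivalence relation, and moreover that $h(0)$ is a union of ``rectangles'' $[x]_E \times [y]_E$ lying inside single pairs of $E$-classes, so that no pair outside $h(0)$ ever has both endpoints $E$-equivalent to endpoints of a pair inside $h(0)$. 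Concretely: if $(x,y) \in h(0)$ and $(y,z) \in h(\top)$ then $(x,z) \in h(0;\top) \subseteq h(0)$, and symmetrically on the left; iterating gives reflexivity (any $x$ with $(x,y)\in h(0)$ for some $y$ yields $(x,x)\in h(0)$), symmetry (compose $(x,y)\in h(0)$ with $(y,x)\in h(\top)$ to get... actually one needs $(y,x)$ in $h$ of something; use that $y$ occurs as the endpoint of some pair and chase through $\top$ as in Lemma~\ref{lem:idrev}), and transitivity (from $0 = 0;0$). The argument should closely mirror Lemmas~\ref{lem:idrev}, \ref{lem:eqRel}, and \ref{lem:eqClassEqLabel}, replacing $1'$ by $0$ and using $0 \leq a$ where the absolute case used $0 \leq 1'$.

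Next I would handle the pairs $x$ that are $E$-singletons, i.e.\ those not touched by $h(0)$ at all. For those there is nothing to collapse. The quotient base is $X' = X/E$ (equivalently, identify all points inside a nonempty $h(0)$-rectangle component, keep everything else), and I define
$$h'(a) = \{([x]_E, [y]_E) \mid (x,y) \in h(a)\} \setminus h'(0)^{\text{-part}},$$
more precisely $h'(a) = \{([x]_E,[y]_E) : (x,y) \in h(a)\}$ for $a \neq 0$ and $h'(0) = \emptyset$. The analogue of Lemma~\ref{lem:eqClassEqLabel} — that membership $(x,y)\in h(a)$ depends only on $[x]_E,[y]_E$, \emph{for every $a$ except possibly $0$} — is what makes $h'$ well-defined and injective: if $a \not\leq b$ there is $(x,y) \in h(a)\setminus h(b)$, and if $a \neq 0$ then no $E$-equivalent pair can lie in $h(b)$ unless it already does, so $([x]_E,[y]_E) \notin h'(b)$; the case $a = 0$ is trivial since $h'(0) = \emptyset \neq h'(b)$ whenever $b$ has a nonempty representation, and if $0$ is the only element below $b$... one checks $h(0)$ and $h(b)$ differ. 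Preservation of $;$ is as in the absolute proof: a composition witness at the quotient level is obtained from one downstairs via $a;\top;b \supseteq a;b$ reasoning, using $[y]_E$ as the witness class, and conversely a genuine witness lifts. Finiteness of $X'$ follows from finiteness of $X$.

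The step I expect to be the main obstacle is the well-definedness/separation argument in the presence of $0$: unlike $1'$, the element $0$ sits below \emph{everything}, so collapsing its rectangle threatens to merge points that are distinguished only by the pair $h(0)$ itself, and one must verify that no other element's representation ``straddles'' the boundary of an $h(0)$-rectangle — i.e.\ the analogue of Lemma~\ref{lem:eqClassEqLabel} must be proved with care precisely for $a$ near $0$, and one must confirm that after the collapse $h'$ still \emph{separates} $0$ from every other element (which is automatic once some $b \neq 0$ has $h(b) \not\subseteq h(0)$, true since $h$ is injective and $0$ is the least element). A secondary technical point is showing $E$ is genuinely transitive and symmetric from the semigroup identities alone, since $0;0 = 0$ gives transitivity directly but symmetry requires the same $\top$-chasing trick as Lemma~\ref{lem:idrev}; I would isolate this as a preliminary lemma before defining the quotient.
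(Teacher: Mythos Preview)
Your quotient-by-$h(0)$ strategy does not work, and the paper takes an entirely different route.

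The fatal problem is not symmetry of $E$ (though that is already dubious in the relative setting, where $\top$ is merely transitive and there is no reason for $(y,x)\in\top$ when $(x,y)\in h(0)$). The fatal problem is that collapsing $E$-classes does \emph{not} make the image of $0$ empty: it makes it a subset of the diagonal. If $(x,y)\in h(0)$ then, by your own argument, $(x,x)\in h(0)$, so after quotienting the pair $([x]_E,[x]_E)$ lies in the natural image of $0$. You try to patch this by decreeing $h'(0)=\emptyset$, but then $\to$ is no longer preserved. Concretely, take any $a$ with $a\to 0\neq 0$ (e.g.\ any $a\neq 1$ in a Boolean-semigroup reduct). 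Since $(x,x)\in h(0)\subseteq h(a)\cap h(a\to 0)$, the quotient pair $([x]_E,[x]_E)$ lies in both $h'(a)$ and $h'(a\to 0)$; but $h'(a)\to h'(0)=(\top'\setminus h'(a))\cup\emptyset$ excludes $([x]_E,[x]_E)$. So $h'(a\to 0)\neq h'(a)\to h'(0)$. The analogy with the $1'$ case is misleading: quotienting by $h(1')$ is supposed to produce the diagonal, which is exactly what $1'$ should represent, whereas quotienting by $h(0)$ also produces a piece of the diagonal, which is exactly what $0$ should \emph{not} represent. The identity $1';a;1'=a$ that drives Lemma~\ref{lem:eqClassEqLabel} has no useful analogue with $0$ in place of $1'$.

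The paper's construction is a restriction followed by a disjoint union, not a quotient. For each pair $a\not\leq b$ choose a discriminator $(\iota,o)\in h(a)\setminus h(b)$ and restrict the base to those $x$ with $\iota\mathrel{\top}x$ (or $x=\iota$) and $x\mathrel{\top}o$ (or $x=o$). On this sub-base $h(0)$ is forced to be empty, because any $(x,y)\in h(0)$ there would compose via $1;0;1=0$ to give $(\iota,o)\in h(0)\subseteq h(b)$, contradicting the choice of discriminator. One checks that the restricted map still preserves $\to$ and $;$ and still separates $a$ from $b$. Taking the disjoint union of these restrictions over all pairs $a\not\leq b$ yields the desired representation $h'$ with $h'(0)=\emptyset$, and the base has size at most $|S|^2|X|$.
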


\begin{proof}
Let $h(\s)$ be the proper structure defined by $h$ for some $\top \subseteq X \times X$. As $h$ is a representation, there exists for every pair $a \not \leq b \in S$ a \emph{discriminator pair} $(\iota,o) \in \top$ such that $(\iota,o) \in h(a) \setminus h(b)$.

Define $X^{\iota,o}$ as
$$X^{\iota,o} = \bigg\{x \in X \mid \Big(x = \iota \vee (\iota,x) \in \top\Big) \wedge \Big(y = o \vee (y,o) \in \top\Big) \bigg\}$$
$\top^{\iota,o}$ as $\top \cap (X^{\iota,o} \times X^{\iota,o})$, and a mapping $h^{\iota,o}: \s \to \s(\top^{\iota,o})$ where $h^{\iota,o}(a) = h(a) \cap \top^{\iota,o}$.

First observe that $h^{\iota,o}(0) = \emptyset$. Suppose that there was a pair $(x,y) \in h^{\iota,o}(0)$. If $x=\iota$, we have $(\iota,y) \in h^{\iota,o}(0)$, else $(\iota,x) \in h(1) = \top$ and thus $(\iota, y) \in h(0)$ since $1;0=0$ and $h$ preserves composition. Similarly if $y = o$ we get $(\iota, o) \in h(0)$, else by composing $(\iota,y) \in h(0)$ with $(y,z) \in h(1)$ we get $(\iota,o) \in h(0)$. Since $b \geq 0$ that would mean $(\iota, o) \in h(b)$ that contradicts the fact that $(\iota,o)$ is a discriminator pair for $a \not \leq b$.

Now let us check that $h^{\iota,o}$ preserves composition. Suppose $(x,y) \in h^{\iota,o}(a;b)$. This means that there exists $z \in X$ such that $(x,z) \in h(a)$ and $(y,z) \in h(b)$. If $x=\iota$, we trivially have $(x,\iota) \in h(1) = \top$. Else, by composing $(\iota,x) \in h(1)$ and $(x,y) \in h(a)$ we get $(\iota,y) \in h(1) = \top$ as $1;a \leq 1$. Similarly $(y,o) \in \top$ and thus $y \in X^{\iota,o}$. Thus we have $(x,z) \in h^{\iota,o}(a), (y,z) \in h^{\iota,o}(b)$ and we have shown $h^{\iota,o}(a;b) \subseteq h^{\iota,o}(a);h^{\iota,o}(b)$. The fact that $h^{\iota,o}(a;b) \supseteq h^{\iota,o}(a);h^{\iota,o}(b)$ follows from $(x,y),(y,z) \in \top^{\iota,o}$ then $x,z \in X^{\iota,o}$ and we have $(x,z) \in \top^{\iota,o}$. Thus $h^{\iota,o}$ preserves composition.

We have $h^{\iota,o}(a) \neq h^{\iota,o}(b)$ as $(\iota,o) \in \top^{\iota,o}$. The operation $\to$ is preserved by $h^{\iota,o}$ as for all $(x,y) \in \top^{\iota,o}$ it holds $(x,y) \in h(a) \Longleftrightarrow (x,y) \in h^{\iota,o}(a)$. Finally, $|X^{\iota,o}| \leq |X|$. Thus we conclude that $h^\{\iota,o\}$ is a homomorphism for $\s$ that discriminates the pair $a \not \leq b$.

Now let us pick for every $a \not \leq b$ a $\delta(a,b) = (\iota,o)$ such that $(\iota,o) \in \top$ is a discriminator pair for $a \not \leq b$ and let $\dot\cup$ denote a disjoint union. A mapping
\begin{gather*}
    h': S \to \wp\left(\dot\bigcup_{a \not \leq b \in S}\top^{\delta(a,b)} \right)\\
    h'(c) = \dot\bigcup_{a \not \leq b \in S}h^{\delta(a,b)}(c)
\end{gather*}
still represents $;, \to$ correctly, discriminates all pairs $a \not \leq b$ (i.e. is injective), which makes it a representation. Furthermore, $h'(0) = \emptyset$ and the size of its base is bounded by $|S|^2|X|$.
\qed\end{proof}

\begin{theorem}
The (finite) representation decision problem for implication semigroups is undecidable.
\end{theorem}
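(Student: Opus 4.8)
The plan is to reduce the (finite) representation problem for Boolean semigroups to that for implication semigroups, using Lemma~\ref{lem:zeroRep} as the key technical bridge. Given a Boolean semigroup $\B = (B,0,1,-,+,;)$, take its implication semigroup reduct $\s(\B) = (B, \to, ;)$ where $a \to b$ is the term $-a + b$. One checks routinely that $\s(\B) \in \IS$: the implication-algebra axioms hold because a Boolean algebra with $\to$ defined this way is an implication algebra, and the two quasi-additivity axioms follow from additivity of $;$ over $+$ together with $0;a = a;0 = 0$ (which is exactly what makes $(a \to b) \to b = a + b$ interact correctly with composition). Note also that $0$ is definable in $\s(\B)$ as the bottom of the induced order, and $1 = a \to a$ recovers the Boolean top, so no information is lost in passing to the reduct.

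Next I would establish the equivalence: $\B$ is (finitely) representable as a Boolean semigroup if and only if $\s(\B)$ is (finitely) representable as an implication semigroup. The left-to-right direction is immediate, since a representation of $\B$ over transitive $\top \subseteq X \times X$ restricts to a representation of the reduct over the same base. For right-to-left, suppose $h$ represents $\s(\B)$ over some transitive $\top$. The element $0 \in \s(\B)$ satisfies $0 \leq a$ and $0;a = a;0 = 0$ for all $a$, so Lemma~\ref{lem:zeroRep} yields a representation $h'$ of $\s(\B)$ with $h'(0) = \emptyset$, over a base that is finite whenever the original one was. It remains to argue that $h'$ is in fact a Boolean-semigroup representation: since $h'(0) = \emptyset$ and $h'(1) = h'(a \to a) = \top$ (by Proposition~\ref{prop:jointop} applied to the implication-algebra reduct), and since $h'$ preserves $\to$, it preserves $+$ (as $a + b = (a \to b) \to b$ maps to union by Proposition~\ref{prop:jointop}) and hence also $-a = a \to 0$ maps to $\top \setminus h'(a)$; thus $h'$ preserves the full Boolean structure and $;$, i.e. $\B$ is (finitely) representable.

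Finally I would invoke the undecidability inputs: \cite[Theorem 11.2]{hirsch2021undecidability} for the representation problem and \cite[Theorem 2.5]{neuzerling2016undecidability} for the finite representation problem, both for Boolean semigroups. Since $\B \mapsto \s(\B)$ is a computable map between finite structures and preserves and reflects (finite) representability by the equivalence just established, undecidability transfers to $\IS$ with relative implication, completing the theorem.

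I expect the main obstacle to be the right-to-left direction of the equivalence, specifically confirming that once $h'(0) = \emptyset$ is arranged the representation automatically respects negation and join. The subtlety is that $h'$ a priori only preserves $\to$ and $;$; one must use $h'(1) = \top$ and $h'(0) = \emptyset$ together with Proposition~\ref{prop:jointop} to pin down that $h'(-a) = \top \setminus h'(a)$ exactly (not merely up to the ambient $\top$), and that the join is genuine set union. This is where the absolute-versus-relative distinction could bite, so the role of Lemma~\ref{lem:zeroRep} in forcing $h'(0) = \emptyset$ is essential and should be highlighted.
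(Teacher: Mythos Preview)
Your proposal is correct and follows essentially the same approach as the paper: reduce from Boolean semigroups by passing to the $\langle \to,;\rangle$-reduct, use Lemma~\ref{lem:zeroRep} to force $h'(0)=\emptyset$, and then recover $+$, $1$, and $-$ via Proposition~\ref{prop:jointop} and the identity $-a = a \to 0$. Your write-up is slightly more explicit than the paper's (you spell out the verification that $\s(\B)\in\IS$ and the computability of the reduction), but the argument is the same.
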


\begin{proof}
We show this by proving that $\B \in \BS$ is representable if and only if its $\langle \to, ; \rangle$-reduct $\s$ is representable. The left to right implication is trivial as $a \to b \in S$ is term-definable as $(-a) + b \in B$. For the right to left implication, $+, 1$ are term-definable in $\langle \to, ;\rangle$ (see Proposition~\ref{prop:jointop}). By Lemma~\ref{lem:zeroRep} the $\langle \to, ; \rangle$-reduct of $\B$ is representable if and only if it has a representation where $h(0) = \emptyset$. See how in that representation $a \to 0$ (corresponding to $-a + 0 = -a$ in the Boolean semigroup) is represented as $\top \setminus a \cup \emptyset = \top \setminus a$.

As the (finite) representation decision problem is undecidable for Boolean semigroups, we conclude the same for implication semigroups.
\qed\end{proof}

\section{Problems}

In this section we outline some open problems. It follows from the results in Section~\ref{sec:undec} that the class of representable implication semigroups is not finitely axiomatisable, nor does it have the finite representation property, i.e. not every finite representable structure in the class is finitely representable. However, another decision problem of interest is posed below.

\begin{problem}
Is membership in the equational theory generated by the class of representable implication semigroups decidable?
\end{problem}

The reader can see that if we add the bottom element $0$ to the signature, the undecidability follows from the undecidability of the equational theory of the Boolean semigroups as described in \cite{hirsch2021undecidability}. This is because all negations of terms $-t$ can be rewritten as $t \to 0$ and all joins $t+t'$ as $(t \to t') \to t'$ where $t,t'$ are terms.

The problem remains open for the class of representable implication semigroups without the bottom element. One of the possible ways to prove undecidability is by using discriminator terms, defined below.

\begin{definition}
A \emph{discriminator term} $d(a,b,c)$ is a term defined in terms of elements of algebra $a,b,c$ such that for all representable algebras $d(a,b,c) = c$ if $a=b$ and $a$ otherwise.
\end{definition}

Although the existence of discriminator terms is not a guarantee for the undecidability of the equational theory membership decision problem, it is an interesting open question in its own right. 

\begin{problem}
Is it possible to define a discriminator term in the language of implication semigroups?
\end{problem}

It is well known that subreducts of representable relation algebras form quasivarieties. As such, the class of implication semigroups can be characterised by quasiequations. However, some open questions about the equational theory generated by the class of representable implication semigroups are listed below. 

\begin{problem}
Is the class of representable implication semigroups a variety?
\end{problem}

\begin{problem}
Is the equational theory generated by the class of representable implication semigroups finitely axiomatisable?
\end{problem}

We continue by looking at the alternative interpretations of $\to$ operation for binary relations. An interesting example, as mentioned in the introduction section is that of a weakening relation defined below.

\begin{definition}
Let $\mathbf{P} = (X, \leq)$ be a poset. $R \subseteq X\times X$ is a \emph{weakening relation} if and only if ${\leq};R;{\leq} \subseteq R$.
\end{definition}

In the context of the weakening relation algebras as described in \cite{galatos2020structure}, the $\to$ operation can be given in first order terms as
$$R \to S = \{(x,y) \mid \forall x',y': ((x' \leq x \wedge y \leq y' \wedge (x',y') \in R) \Rightarrow (x',y') \in S) \}$$
where $R,S$ are weakening relations over a poset $\mathbf{P} = (X,\leq)$.

This interpretation of the $\to$ operation gives rise to the class of representable weakening implication semigroups, for which the following properties remain open.

\begin{problem}
Is the (finite) representation decision problem decidable for the class of representable weakening implication semigroups? Is the class finitely axiomatisable and does it have the finite representation property?
\end{problem}

\begin{problem}
Is the class of representable weakening implication semigroups a (discriminator) variety? Is the equational theory generated by the class finitely axiomatisable/decidable?
\end{problem}

Finally, we note that it can be checked that all results presented in this paper can be generalised to the dual operation $\leftarrow$ by presenting dual axioms for the class of implication algebras and defining negation as $0 \leftarrow a$.

\bibliographystyle{splncs04.bst}
\bibliography{ref}

\end{document}